\newtheorem{theorem}{Theorem}
\newtheorem{axiom}[theorem]{Axiom}
\newtheorem{condition}[theorem]{Condition}
\newtheorem{conjecture}[theorem]{Conjecture}
\newtheorem{corollary}[theorem]{Corollary}
\newtheorem{definition}[theorem]{Definition}
\newtheorem{example}[theorem]{Example}
\newtheorem{exercise}[theorem]{Exercise}
\newtheorem{lemma}[theorem]{Lemma}
\newtheorem{proposition}[theorem]{Proposition}
\newtheorem{remark}[theorem]{Remark}
\newenvironment{proof}[1][Proof]{\noindent\textbf{#1.} }{\ \rule{0.5em}{0.5em}}
\chardef\@x10\chardef\@xv60
\def\tcitime{
\def\@time{%
  \@minute\time\@hour\@minute\divide\@hour\@xv
  \ifnum\@hour<\@x 0\fi\the\@hour:%
  \multiply\@hour\@xv\advance\@minute-\@hour
  \ifnum\@minute<\@x 0\fi\the\@minute
  }}%
\def\x@hyperref#1#2#3{%
   \catcode`\~ = 12
   \catcode`\$ = 12
   \catcode`\_ = 12
   \catcode`\# = 12
   \catcode`\& = 12
   \y@hyperref{#1}{#2}{#3}%
}
\def\y@hyperref#1#2#3#4{%
   #2\ref{#4}#3
   \catcode`\~ = 13
   \catcode`\$ = 3
   \catcode`\_ = 8
   \catcode`\# = 6
   \catcode`\& = 4
}
\def\QCTOpt[#1]#2{%
  \def\QCTOptB{#1}
  \def\QCTOptA{#2}
}
\def\QCTNOpt#1{%
  \def\QCTOptA{#1}
  \let\QCTOptB\empty
}
\def\Qct{%
  \@ifnextchar[{%
    \QCTOpt}{\QCTNOpt}
}
\def\QCBOpt[#1]#2{%
  \def\QCBOptB{#1}%
  \def\QCBOptA{#2}%
}
\def\QCBNOpt#1{%
  \def\QCBOptA{#1}%
  \let\QCBOptB\empty
}
\def\Qcb{%
  \@ifnextchar[{%
    \QCBOpt}{\QCBNOpt}%
}
\def\PrepCapArgs{%
  \ifx\QCBOptA\empty
    \ifx\QCTOptA\empty
      {}%
    \else
      \ifx\QCTOptB\empty
        {\QCTOptA}%
      \else
        [\QCTOptB]{\QCTOptA}%
      \fi
    \fi
  \else
    \ifx\QCBOptA\empty
      {}%
    \else
      \ifx\QCBOptB\empty
        {\QCBOptA}%
      \else
        [\QCBOptB]{\QCBOptA}%
      \fi
    \fi
  \fi
}
\def\GRAPHICSPS#1{%
 \ifcase\GRAPHICSTYPE
   \special{ps: #1}%
 \or
   \special{language "PS", include "#1"}%
 \fi
}%
\def\graffile#1#2#3#4{%
    \bgroup
       \@inlabelfalse
       \leavevmode
       \@ifundefined{bbl@deactivate}{\def~{\string~}}{\activesoff}%
        \raise -#4 \BOXTHEFRAME{%
           \hbox to #2{\raise #3\hbox to #2{\null #1\hfil}}}%
    \egroup
}%
\def\draftbox#1#2#3#4{%
 \leavevmode\raise -#4 \hbox{%
  \frame{\rlap{\protect\tiny #1}\hbox to #2%
   {\vrule height#3 width\z@ depth\z@\hfil}%
  }%
 }%
}%
\let\nographics=\@msidraft
\newif\ifwasdraft
\def\GRAPHIC#1#2#3#4#5{%
   \ifnum\@msidraft=\@ne\draftbox{#2}{#3}{#4}{#5}%
   \else\graffile{#1}{#3}{#4}{#5}%
   \fi
}
\def\addtoLaTeXparams#1{%
    \edef\LaTeXparams{\LaTeXparams #1}}%
\newif\ifBoxFrame \BoxFramefalse
\newif\ifOverFrame \OverFramefalse
\newif\ifUnderFrame \UnderFramefalse
\def\BOXTHEFRAME#1{%
   \hbox{%
      \ifBoxFrame
         \frame{#1}%
      \else
         {#1}%
      \fi
   }%
}
\def\doFRAMEparams#1{\BoxFramefalse\OverFramefalse\UnderFramefalse\readFRAMEparams#1\end}%
\def\readFRAMEparams#1{%
 \ifx#1\end%
  \let\next=\relax
  \else
  \ifx#1i\dispkind=\z@\fi
  \ifx#1d\dispkind=\@ne\fi
  \ifx#1f\dispkind=\tw@\fi
  \ifx#1t\addtoLaTeXparams{t}\fi
  \ifx#1b\addtoLaTeXparams{b}\fi
  \ifx#1p\addtoLaTeXparams{p}\fi
  \ifx#1h\addtoLaTeXparams{h}\fi
  \ifx#1X\BoxFrametrue\fi
  \ifx#1O\OverFrametrue\fi
  \ifx#1U\UnderFrametrue\fi
  \ifx#1w
    \ifnum\@msidraft=1\wasdrafttrue\else\wasdraftfalse\fi
    \@msidraft=\@ne
  \fi
  \let\next=\readFRAMEparams
  \fi
 \next
 }%
\def\IFRAME#1#2#3#4#5#6{%
      \bgroup
      \let\QCTOptA\empty
      \let\QCTOptB\empty
      \let\QCBOptA\empty
      \let\QCBOptB\empty
      #6%
      \parindent=0pt
      \leftskip=0pt
      \rightskip=0pt
      \setbox0=\hbox{\QCBOptA}%
      \@tempdima=#1\relax
      \ifOverFrame
          \typeout{This is not implemented yet}%
          \show\HELP
      \else
         \ifdim\wd0>\@tempdima
            \advance\@tempdima by \@tempdima
            \ifdim\wd0 >\@tempdima
               \setbox1 =\vbox{%
                  \unskip\hbox to \@tempdima{\hfill\GRAPHIC{#5}{#4}{#1}{#2}{#3}\hfill}%
                  \unskip\hbox to \@tempdima{\parbox[b]{\@tempdima}{\QCBOptA}}%
               }%
               \wd1=\@tempdima
            \else
               \textwidth=\wd0
               \setbox1 =\vbox{%
                 \noindent\hbox to \wd0{\hfill\GRAPHIC{#5}{#4}{#1}{#2}{#3}\hfill}\\%
                 \noindent\hbox{\QCBOptA}%
               }%
               \wd1=\wd0
            \fi
         \else
            \ifdim\wd0>0pt
              \hsize=\@tempdima
              \setbox1=\vbox{%
                \unskip\GRAPHIC{#5}{#4}{#1}{#2}{0pt}%
                \break
                \unskip\hbox to \@tempdima{\hfill \QCBOptA\hfill}%
              }%
              \wd1=\@tempdima
           \else
              \hsize=\@tempdima
              \setbox1=\vbox{%
                \unskip\GRAPHIC{#5}{#4}{#1}{#2}{0pt}%
              }%
              \wd1=\@tempdima
           \fi
         \fi
         \@tempdimb=\ht1
         \advance\@tempdimb by -#2
         \advance\@tempdimb by #3
         \leavevmode
         \raise -\@tempdimb \hbox{\box1}%
      \fi
      \egroup%
}%
\def\DFRAME#1#2#3#4#5{%
  \hfil\break
  \bgroup
     \leftskip\@flushglue
     \rightskip\@flushglue
     \parindent\z@
     \parfillskip\z@skip
     \let\QCTOptA\empty
     \let\QCTOptB\empty
     \let\QCBOptA\empty
     \let\QCBOptB\empty
     \vbox\bgroup
        \ifOverFrame
           #5\QCTOptA\par
        \fi
        \GRAPHIC{#4}{#3}{#1}{#2}{\z@}%
        \ifUnderFrame
           \break#5\QCBOptA
        \fi
     \egroup
   \egroup
   \break
}%
\def\FFRAME#1#2#3#4#5#6#7{%
  \@ifundefined{floatstyle}
    {
     \begin{figure}[#1]%
    }
    {
     \ifx#1h
      \begin{figure}[H]%
     \else
      \begin{figure}[#1]%
     \fi
    }
  \let\QCTOptA\empty
  \let\QCTOptB\empty
  \let\QCBOptA\empty
  \let\QCBOptB\empty
  \ifOverFrame
    #4
    \ifx\QCTOptA\empty
    \else
      \ifx\QCTOptB\empty
        \caption{\QCTOptA}%
      \else
        \caption[\QCTOptB]{\QCTOptA}%
      \fi
    \fi
    \ifUnderFrame\else
      \label{#5}%
    \fi
  \else
    \UnderFrametrue%
  \fi
  \begin{center}\GRAPHIC{#7}{#6}{#2}{#3}{\z@}\end{center}%
  \ifUnderFrame
    #4
    \ifx\QCBOptA\empty
      \caption{}%
    \else
      \ifx\QCBOptB\empty
        \caption{\QCBOptA}%
      \else
        \caption[\QCBOptB]{\QCBOptA}%
      \fi
    \fi
    \label{#5}%
  \fi
  \end{figure}%
 }%
\def\makeactives{
  \catcode`\"=\active
  \catcode`\;=\active
  \catcode`\:=\active
  \catcode`\'=\active
  \catcode`\~=\active
}
   \gdef\activesoff{%
      \def"{\string"}%
      \def;{\string;}%
      \def:{\string:}%
      \def'{\string'}%
      \def~{\string~}%
    }
\def\FRAME#1#2#3#4#5#6#7#8{%
 \bgroup
 \ifnum\@msidraft=\@ne
   \wasdrafttrue
 \else
   \wasdraftfalse%
 \fi
 \def\LaTeXparams{}%
 \dispkind=\z@
 \def\LaTeXparams{}%
 \doFRAMEparams{#1}%
 \ifnum\dispkind=\z@\IFRAME{#2}{#3}{#4}{#7}{#8}{#5}\else
  \ifnum\dispkind=\@ne\DFRAME{#2}{#3}{#7}{#8}{#5}\else
   \ifnum\dispkind=\tw@
    \edef\@tempa{\noexpand\FFRAME{\LaTeXparams}}%
    \@tempa{#2}{#3}{#5}{#6}{#7}{#8}%
    \fi
   \fi
  \fi
  \ifwasdraft\@msidraft=1\else\@msidraft=0\fi{}%
  \egroup
 }%
\def\TEXUX#1{"texux"}
\long\def\QQQ#1#2{%
     \long\expandafter\def\csname#1\endcsname{#2}}%
\long\def\QQA#1#2{}%
\def\QTR#1#2{{\csname#1\endcsname {#2}}}%
\def\EXPAND#1[#2]#3{}%
\def\NOEXPAND#1[#2]#3{}%
\def\LaTeXparent#1{}%
\def\ChildStyles#1{}%
\def\ChildDefaults#1{}%
\def\QTagDef#1#2#3{}%
  \providecommand{\UNICODE}[2][]{\protect\rule{.1in}{.1in}}
  \providecommand{\U}[1]{\protect\rule{.1in}{.1in}}
\def\QQfnmark#1{\footnotemark}
 \def\abstract{%
  \if@twocolumn
   \section*{Abstract (Not appropriate in this style!)}%
   \else \small
   \begin{center}{\bf Abstract\vspace{-.5em}\vspace{\z@}}\end{center}%
   \quotation
   \fi
  }%
   \def\registered{\relax\ifmmode{}\r@gistered
                    \else$\m@th\r@gistered$\fi}%
 \def\r@gistered{^{\ooalign
  {\hfil\raise.07ex\hbox{$\scriptstyle\rm\text{R}$}\hfil\crcr
  \mathhexbox20D}}}}{}%
\newdimen\theight
\def\newfmtname{LaTeX2e}
  \DeclareOldFontCommand{\rm}{\normalfont\rmfamily}{\mathrm}
  \DeclareOldFontCommand{\sf}{\normalfont\sffamily}{\mathsf}
  \DeclareOldFontCommand{\tt}{\normalfont\ttfamily}{\mathtt}
  \DeclareOldFontCommand{\bf}{\normalfont\bfseries}{\mathbf}
  \DeclareOldFontCommand{\it}{\normalfont\itshape}{\mathit}
  \DeclareOldFontCommand{\sl}{\normalfont\slshape}{\@nomath\sl}
  \DeclareOldFontCommand{\sc}{\normalfont\scshape}{\@nomath\sc}
\def\alpha{{\Greekmath 010B}}%
\def\beta{{\Greekmath 010C}}%
\def\gamma{{\Greekmath 010D}}%
\def\delta{{\Greekmath 010E}}%
\def\epsilon{{\Greekmath 010F}}%
\def\zeta{{\Greekmath 0110}}%
\def\eta{{\Greekmath 0111}}%
\def\theta{{\Greekmath 0112}}%
\def\iota{{\Greekmath 0113}}%
\def\kappa{{\Greekmath 0114}}%
\def\lambda{{\Greekmath 0115}}%
\def\mu{{\Greekmath 0116}}%
\def\nu{{\Greekmath 0117}}%
\def\xi{{\Greekmath 0118}}%
\def\pi{{\Greekmath 0119}}%
\def\rho{{\Greekmath 011A}}%
\def\sigma{{\Greekmath 011B}}%
\def\tau{{\Greekmath 011C}}%
\def\upsilon{{\Greekmath 011D}}%
\def\phi{{\Greekmath 011E}}%
\def\chi{{\Greekmath 011F}}%
\def\psi{{\Greekmath 0120}}%
\def\omega{{\Greekmath 0121}}%
\def\varepsilon{{\Greekmath 0122}}%
\def\vartheta{{\Greekmath 0123}}%
\def\varpi{{\Greekmath 0124}}%
\def\varrho{{\Greekmath 0125}}%
\def\varsigma{{\Greekmath 0126}}%
\def\varphi{{\Greekmath 0127}}%
\def\nabla{{\Greekmath 0272}}
\def\FindBoldGroup{%
   {\setbox0=\hbox{$\mathbf{x\global\edef\theboldgroup{\the\mathgroup}}$}}%
}
\def\Greekmath#1#2#3#4{%
    \if@compatibility
        \ifnum\mathgroup=\symbold
           \mathchoice{\mbox{\boldmath$\displaystyle\mathchar"#1#2#3#4$}}%
                      {\mbox{\boldmath$\textstyle\mathchar"#1#2#3#4$}}%
                      {\mbox{\boldmath$\scriptstyle\mathchar"#1#2#3#4$}}%
                      {\mbox{\boldmath$\scriptscriptstyle\mathchar"#1#2#3#4$}}%
        \else
           \mathchar"#1#2#3#4%
        \fi
    \else
        \FindBoldGroup
        \ifnum\mathgroup=\theboldgroup 
           \mathchoice{\mbox{\boldmath$\displaystyle\mathchar"#1#2#3#4$}}%
                      {\mbox{\boldmath$\textstyle\mathchar"#1#2#3#4$}}%
                      {\mbox{\boldmath$\scriptstyle\mathchar"#1#2#3#4$}}%
                      {\mbox{\boldmath$\scriptscriptstyle\mathchar"#1#2#3#4$}}%
        \else
           \mathchar"#1#2#3#4%
        \fi
      \fi}
\newif\ifGreekBold  \GreekBoldfalse
\let\SAVEPBF=\pbf
\def\pbf{\GreekBoldtrue\SAVEPBF}%
  \newcounter{equationnumber}
  \def\mathletters{%
     \addtocounter{equation}{1}
     \edef\@currentlabel{\theequation}%
     \setcounter{equationnumber}{\c@equation}
     \setcounter{equation}{0}%
     \edef\theequation{\@currentlabel\noexpand\alph{equation}}%
  }
    \def\BibTeX{{\rm B\kern-.05em{\sc i\kern-.025em b}\kern-.08em
                 T\kern-.1667em\lower.7ex\hbox{E}\kern-.125emX}}}{}%
\def\AmS{{\protect\usefont{OMS}{cmsy}{m}{n}%
                A\kern-.1667em\lower.5ex\hbox{M}\kern-.125emS}}}{}%
\def\@@eqncr{\let\@tempa\relax
    \ifcase\@eqcnt \def\@tempa{& & &}\or \def\@tempa{& &}%
      \else \def\@tempa{&}\fi
     \@tempa
     \if@eqnsw
        \iftag@
           \@taggnum
        \else
           \@eqnnum\stepcounter{equation}%
        \fi
     \fi
     \global\tag@false
     \global\@eqnswtrue
     \global\@eqcnt\z@\cr}
\def\TCItag{\@ifnextchar*{\@TCItagstar}{\@TCItag}}
\def\@TCItag#1{%
    \global\tag@true
    \global\def\@taggnum{(#1)}}
\def\@TCItagstar*#1{%
    \global\tag@true
    \global\def\@taggnum{#1}}
\def\dprod{\mathop{\displaystyle \prod }}%
\def\ExitTCILatex{\makeatother }
\let\DOTSI\relax
\def\RIfM@{\relax\ifmmode}%
\def\FN@{\futurelet\next}%
\def\iint{\DOTSI\intno@\tw@\FN@\ints@}%
\def\iiint{\DOTSI\intno@\thr@@\FN@\ints@}%
\def\iiiint{\DOTSI\intno@4 \FN@\ints@}%
\def\idotsint{\DOTSI\intno@\z@\FN@\ints@}%
\def\ints@{\findlimits@\ints@@}%
\newif\iflimtoken@
\newif\iflimits@
\def\findlimits@{\limtoken@true\ifx\next\limits\limits@true
 \else\ifx\next\nolimits\limits@false\else
 \limtoken@false\ifx\ilimits@\nolimits\limits@false\else
 \ifinner\limits@false\else\limits@true\fi\fi\fi\fi}%
\def\multint@{\int\ifnum\intno@=\z@\intdots@                          
 \else\intkern@\fi                                                    
 \ifnum\intno@>\tw@\int\intkern@\fi                                   
 \ifnum\intno@>\thr@@\int\intkern@\fi                                 
 \int}
\def\multintlimits@{\intop\ifnum\intno@=\z@\intdots@\else\intkern@\fi
 \ifnum\intno@>\tw@\intop\intkern@\fi
 \ifnum\intno@>\thr@@\intop\intkern@\fi\intop}%
\def\intic@{%
    \mathchoice{\hskip.5em}{\hskip.4em}{\hskip.4em}{\hskip.4em}}%
\def\negintic@{\mathchoice
 {\hskip-.5em}{\hskip-.4em}{\hskip-.4em}{\hskip-.4em}}%
\def\ints@@{\iflimtoken@                                              
 \def\ints@@@{\iflimits@\negintic@
   \mathop{\intic@\multintlimits@}\limits                             
  \else\multint@\nolimits\fi                                          
  \eat@}
 \else                                                                
 \def\ints@@@{\iflimits@\negintic@
  \mathop{\intic@\multintlimits@}\limits\else
  \multint@\nolimits\fi}\fi\ints@@@}%
\def\intkern@{\mathchoice{\!\!\!}{\!\!}{\!\!}{\!\!}}%
\def\plaincdots@{\mathinner{\cdotp\cdotp\cdotp}}%
\def\intdots@{\mathchoice{\plaincdots@}%
 {{\cdotp}\mkern1.5mu{\cdotp}\mkern1.5mu{\cdotp}}%
 {{\cdotp}\mkern1mu{\cdotp}\mkern1mu{\cdotp}}%
 {{\cdotp}\mkern1mu{\cdotp}\mkern1mu{\cdotp}}}%
\def\RIfM@{\relax\protect\ifmmode}
\def\text{\RIfM@\expandafter\text@\else\expandafter\mbox\fi}
\let\nfss@text\text
\def\text@#1{\mathchoice
   {\textdef@\displaystyle\f@size{#1}}%
   {\textdef@\textstyle\tf@size{\firstchoice@false #1}}%
   {\textdef@\textstyle\sf@size{\firstchoice@false #1}}%
   {\textdef@\textstyle \ssf@size{\firstchoice@false #1}}%
   \glb@settings}
\def\textdef@#1#2#3{\hbox{{%
                    \everymath{#1}%
                    \let\f@size#2\selectfont
                    #3}}}
\newif\iffirstchoice@
\def\Let@{\relax\iffalse{\fi\let\\=\cr\iffalse}\fi}%
\def\vspace@{\def\vspace##1{\crcr\noalign{\vskip##1\relax}}}%
\def\multilimits@{\bgroup\vspace@\Let@
 \baselineskip\fontdimen10 \scriptfont\tw@
 \advance\baselineskip\fontdimen12 \scriptfont\tw@
 \lineskip\thr@@\fontdimen8 \scriptfont\thr@@
 \lineskiplimit\lineskip
 \vbox\bgroup\ialign\bgroup\hfil$\m@th\scriptstyle{##}$\hfil\crcr}%
\def\Sb{_\multilimits@}%
\def\endSb{\crcr\egroup\egroup\egroup}%
\def\Sp{^\multilimits@}%
\newdimen\ex@
\def\rightarrowfill@#1{$#1\m@th\mathord-\mkern-6mu\cleaders
 \hbox{$#1\mkern-2mu\mathord-\mkern-2mu$}\hfill
 \mkern-6mu\mathord\rightarrow$}%
\def\leftarrowfill@#1{$#1\m@th\mathord\leftarrow\mkern-6mu\cleaders
 \hbox{$#1\mkern-2mu\mathord-\mkern-2mu$}\hfill\mkern-6mu\mathord-$}%
\def\leftrightarrowfill@#1{$#1\m@th\mathord\leftarrow
\mkern-6mu\cleaders
 \hbox{$#1\mkern-2mu\mathord-\mkern-2mu$}\hfill
 \mkern-6mu\mathord\rightarrow$}%
\def\overrightarrow{\mathpalette\overrightarrow@}%
\def\overrightarrow@#1#2{\vbox{\ialign{##\crcr\rightarrowfill@#1\crcr
 \noalign{\kern-\ex@\nointerlineskip}$\m@th\hfil#1#2\hfil$\crcr}}}%
\def\overleftarrow{\mathpalette\overleftarrow@}%
\def\overleftarrow@#1#2{\vbox{\ialign{##\crcr\leftarrowfill@#1\crcr
 \noalign{\kern-\ex@\nointerlineskip}$\m@th\hfil#1#2\hfil$\crcr}}}%
\def\overleftrightarrow{\mathpalette\overleftrightarrow@}%
\def\overleftrightarrow@#1#2{\vbox{\ialign{##\crcr
   \leftrightarrowfill@#1\crcr
 \noalign{\kern-\ex@\nointerlineskip}$\m@th\hfil#1#2\hfil$\crcr}}}%
\def\underrightarrow{\mathpalette\underrightarrow@}%
\def\underrightarrow@#1#2{\vtop{\ialign{##\crcr$\m@th\hfil#1#2\hfil
  $\crcr\noalign{\nointerlineskip}\rightarrowfill@#1\crcr}}}%
\def\underleftarrow{\mathpalette\underleftarrow@}%
\def\underleftarrow@#1#2{\vtop{\ialign{##\crcr$\m@th\hfil#1#2\hfil
  $\crcr\noalign{\nointerlineskip}\leftarrowfill@#1\crcr}}}%
\def\underleftrightarrow{\mathpalette\underleftrightarrow@}%
\def\underleftrightarrow@#1#2{\vtop{\ialign{##\crcr$\m@th
  \hfil#1#2\hfil$\crcr
 \noalign{\nointerlineskip}\leftrightarrowfill@#1\crcr}}}%
\def\qopnamewl@#1{\mathop{\operator@font#1}\nlimits@}
\let\nlimits@\displaylimits
\def\setboxz@h{\setbox\z@\hbox}
\def\varlim@#1#2{\mathop{\vtop{\ialign{##\crcr
 \hfil$#1\m@th\operator@font lim$\hfil\crcr
 \noalign{\nointerlineskip}#2#1\crcr
 \noalign{\nointerlineskip\kern-\ex@}\crcr}}}}
 \def\rightarrowfill@#1{\m@th\setboxz@h{$#1-$}\ht\z@\z@
  $#1\copy\z@\mkern-6mu\cleaders
  \hbox{$#1\mkern-2mu\box\z@\mkern-2mu$}\hfill
  \mkern-6mu\mathord\rightarrow$}
\def\leftarrowfill@#1{\m@th\setboxz@h{$#1-$}\ht\z@\z@
  $#1\mathord\leftarrow\mkern-6mu\cleaders
  \hbox{$#1\mkern-2mu\copy\z@\mkern-2mu$}\hfill
  \mkern-6mu\box\z@$}
\def\projlim{\qopnamewl@{proj\,lim}}
\def\injlim{\qopnamewl@{inj\,lim}}
\def\varinjlim{\mathpalette\varlim@\rightarrowfill@}
\def\varprojlim{\mathpalette\varlim@\leftarrowfill@}
\def\varliminf{\mathpalette\varliminf@{}}
\def\varliminf@#1{\mathop{\underline{\vrule\@depth.2\ex@\@width\z@
   \hbox{$#1\m@th\operator@font lim$}}}}
\def\varlimsup{\mathpalette\varlimsup@{}}
\def\varlimsup@#1{\mathop{\overline
  {\hbox{$#1\m@th\operator@font lim$}}}}
\def\align{\@verbatim \frenchspacing\@vobeyspaces \@alignverbatim
You are using the "align" environment in a style in which it is not defined.}
\let\csname endalign*\endcsname =\endtrivlist
\def\alignat{\@verbatim \frenchspacing\@vobeyspaces \@alignatverbatim
You are using the "alignat" environment in a style in which it is not defined.}
\let\csname endalignat*\endcsname =\endtrivlist
\def\xalignat{\@verbatim \frenchspacing\@vobeyspaces \@xalignatverbatim
You are using the "xalignat" environment in a style in which it is not defined.}
\let\csname endxalignat*\endcsname =\endtrivlist
\def\gather{\@verbatim \frenchspacing\@vobeyspaces \@gatherverbatim
You are using the "gather" environment in a style in which it is not defined.}
\let\csname endgather*\endcsname =\endtrivlist
\def\multiline{\@verbatim \frenchspacing\@vobeyspaces \@multilineverbatim
You are using the "multiline" environment in a style in which it is not defined.}
\let\csname endmultiline*\endcsname =\endtrivlist
\def\arrax{\@verbatim \frenchspacing\@vobeyspaces \@arraxverbatim
You are using a type of "array" construct that is only allowed in AmS-LaTeX.}
\def\tabulax{\@verbatim \frenchspacing\@vobeyspaces \@tabulaxverbatim
You are using a type of "tabular" construct that is only allowed in AmS-LaTeX.}
\let\csname endarrax*\endcsname =\endtrivlist
\let\csname endtabulax*\endcsname =\endtrivlist
 \def\endequation{%
     \ifmmode\ifinner 
      \iftag@
        \addtocounter{equation}{-1} 
        $\hfil
           \displaywidth\linewidth\@taggnum\egroup \endtrivlist
        \global\tag@false
        \global\@ignoretrue
      \else
        $\hfil
           \displaywidth\linewidth\@eqnnum\egroup \endtrivlist
        \global\tag@false
        \global\@ignoretrue
      \fi
     \else
      \iftag@
        \addtocounter{equation}{-1} 
        \eqno \hbox{\@taggnum}
        \global\tag@false%
        $$\global\@ignoretrue
      \else
        \eqno \hbox{\@eqnnum}
        $$\global\@ignoretrue
      \fi
     \fi\fi
 }
 \newif\iftag@ \tag@false
 \def\TCItag{\@ifnextchar*{\@TCItagstar}{\@TCItag}}
 \def\@TCItag#1{%
     \global\tag@true
     \global\def\@taggnum{(#1)}}
 \def\@TCItagstar*#1{%
     \global\tag@true
     \global\def\@taggnum{#1}}
     \def\tag{\@ifnextchar*{\@tagstar}{\@tag}}
     \def\@tag#1{%
         \global\tag@true
         \global\def\@taggnum{(#1)}}
     \def\@tagstar*#1{%
         \global\tag@true
         \global\def\@taggnum{#1}}
\begin{document}

\title{On the splitting-up method for rough (partial) differential equations}
\author{Peter Friz \\
TU Berlin and WIAS \and Harald Oberhauser \\
TU Berlin}
\maketitle

\begin{abstract}
This article introduces the splitting method to systems responding to rough
paths as external stimuli. The focus is on nonlinear partial differential
equations with rough noise but we also cover rough differential equations.
Applications to stochastic partial differential equations arising in control
theory and nonlinear filtering are given.
\end{abstract}

\section{\protect\bigskip Introduction}

This article introduces the splitting-up method for systems responding to
rough paths as external stimuli. We deal with (nonlinear) rough partial
differential equations, RPDEs, formally written as%
\begin{equation}
du=F\left( t,x,u,Du,D^{2}u\right) dt+\Lambda \left( t,x,u,Du\right) d\mathbf{%
z}\text{ on }\left( 0,T\right] \times \mathbb{R}^{e},\text{ }u\left(
0,x\right) =u_{0}\left( x\right)  \label{EqSPDEIntro}
\end{equation}%
but we also cover rough differential equations, RDEs,\bigskip\ of the form%
\begin{equation*}
dy_{t}=V\left( y_{t}\right) dt+W\left( y_{t}\right) d\mathbf{z}_{t}.
\end{equation*}%
In both examples, $\mathbf{z}$ is an external stimuli given as a rough path, 
$F$ is a nonlinear (possibly degenerate) elliptic operator, $\Lambda $ is a
collection of affine linear operators, i.e.%
\begin{equation*}
\Lambda _{k}\left( t,x,r,p\right) =\left( p\cdot \sigma _{k}\left(
t,x\right) \right) +r\,\nu _{k}\left( t,x\right) +g_{k}\left( t,x\right) 
\text{.}
\end{equation*}%
and $\sigma ,\nu ,g$ resp.\ $V,W=\left( W_{i}\right) $ are (collections) of
vector fields on $\left[ 0,T\right] \times \mathbb{R}^{e}$ resp.\ $\mathbb{R}%
^{e}$. Consequences of our results are splitting results for (nonlinear)
stochastic partial differential equations, SPDEs, that is, when $\mathbf{z}$
is taken to be a rough path lift of a stochastic process (e.g.\ fractional
BM with Hurstparameter $>\frac{1}{4}$); for example, linear SPDEs with
Brownian noise in Stratonovich form (e.g. the\ Zakai equation from nonlinear
filtering),%
\begin{equation}
du=L\left( t,x,u,Du,D^{2}u\right) dt+\sum_{k=1}^{d}\Lambda _{k}\left(
t,x,u,Du\right) \circ dB^{k},\text{ }u\left( 0,.\right) =u_{0}\left(
.\right) \text{.}  \label{EqLinearSPDE}
\end{equation}%
are covered. $L$ is here a linear (degenerate elliptic) operator, 
\begin{equation*}
L\left( t,x,r,p,X\right) =\text{Trace}\left[ A\left( t,x\right) \cdot X%
\right] +b\left( t,x\right) \cdot p+c\left( t,x,r\right) ,
\end{equation*}%
and $B$ a standard $d$-dimensional Brownian motion. The splitting-up method
(which runs under many names: dimensional splitting, operator splitting,
Lie-Trotter-Kato formula, Baker-Campbell-Hausdorff formula, Chernoff
formula, leapfrog method, predictor-corrector method, etc.) is one of the
most prominent methods for calculating solutions of (stochastic-, ordinary-,
partial-) differential equations numerically; for a survey we recommend \cite%
{McLachlanQuispel02:SplittingMethods}. For S(P)DEs a splitting-up method was
introduced in \cite{BensoussanGlowinski89:ApprOfZakaiSplitting} for the
Zakai equation in filtering and has received much attention since. We
explicitly mention \cite{KrylovGyongy03:OnSplittingUp} which extends the
previous results to general linear SPDEs of the form $\left( \ref%
{EqLinearSPDE}\right) $.

Let us informally describe the general idea of splitting using $\left( \ref%
{EqLinearSPDE}\right) $: For $n\in \mathbb{N}$ consider the partition $%
D^{n}=\left\{ t_{i}^{n}=in^{-1}T,i=0,\ldots ,n\right\} $ of the interval $%
\left[ 0,T\right] $ and define the approximation $u_{n}$ recursively 
\begin{equation*}
u_{n}\left( t_{i+1}^{n},.\right) :=\left[ \mathbf{Q}_{t_{i}^{n}t_{i+1}^{n}}%
\circ \mathbf{P}_{t_{i}^{n}t_{i+1}^{n}}\right] \left( u_{n}\left(
t_{i}^{n},.\right) \right)
\end{equation*}%
with $\left\{ \mathbf{P}_{st},0\leq s\leq t\leq T\right\} $ and $\left\{ 
\mathbf{Q}_{st},0\leq s\leq t\leq T\right\} $ the solution operators of 
\begin{eqnarray}
dv &=&L\left( t,x,v,Dv,D^{2}v\right) dt,\text{ }v\left( s,x\right) =v\left(
x\right) \text{ and}  \label{Eq_pde} \\
dw &=&\sum_{k=1}^{d}\Lambda _{k}\left( t,x,w,Dw,D^{2}w\right) \circ
dB_{t}^{k},\text{ }w\left( s,x\right) =v\left( x\right) \text{.}
\label{Eq_Stoch}
\end{eqnarray}%
That is, on each interval $\left[ t_{i}^{n},t_{i+1}^{n}\right] $ one solves
first equation $\left( \ref{Eq_Stoch}\right) $ on $\left[
t_{i}^{n},t_{i+1}^{n}\right] $ with initial value $u_{n}\left(
t_{i}^{n},.\right) $ and then one uses its solution as initial value for the
PDE $\left( \ref{Eq_pde}\right) $ (so-called \textquotedblleft predictor"
and \textquotedblleft corrector" steps in \cite{FlorchingerLeGland91}).
Under appropiate conditions, one can show that $u_{n}$ converges to $u$ and
also derive rates of convergence, \cite{KrylovGyongy03:OnSplittingUp}.

All the above mentioned authors use (to the best of our knowledge) either
semigroup theory or stochastic calculus to prove splitting results but
neither are available for (\ref{EqSPDEIntro}) due to the nonlinear operator $%
F$ and the nonsemimartingale noise $\mathbf{z}$. The point of view of this
article is different; loosely speaking: \emph{splitting-up results follow
from stability in a rough path sense}. We combine the method of Krylov and Gy%
\"{o}ngy, \cite{KrylovGyongy03:OnSplittingUp}, of stretching out the
time-scale with certain stability results of RPDEs. Applications to SPDEs
then follow and our results are, to the best of our knowledge, new for
nonlinear PDEs with noise of above form (see \ also \cite%
{LionsSoug98:ViscSPDEfirstOne,
LionsSoug98:ViscSPDEsecondOne,LionsSoug98:ViscSPDEsecondOne,LionsSoug00:ViscSPDESeminlinearNoise,LionsSoug00:ViscSPDEUniqueness}%
). Due to the generality of equation $\left( \ref{EqSPDEIntro}\right) $ we
do not give rates of convergence but hope to return to this question in the
future.

\subsection{Some ideas from rough path and viscosity theory}

Let us recall some basic ideas of (second order) viscosity theory \cite%
{MR1118699UserGuide, MR2179357FS} and rough path theory \cite{lyons-qian-02,
MR2314753}.\ As for viscosity theory, consider a real-valued function $%
u=u\left( t,x\right) $ with $t\in \left[ 0,T\right] ,x\in \mathbb{R}^{e}$
and assume $u\in C^{2}$ is a classical subsolution,%
\begin{equation*}
\partial _{t}u+F\left( t,x,u,Du,D^{2}u\right) \leq 0,
\end{equation*}%
where $F$ is a (continuous) function, \textit{degenerate elliptic} in the
sense that $F\left( t,x,r,p,A+B\right) \leq F\left( t,x,r,p,A\right) $
whenever $B\geq 0$ in the sense of symmetric \ matrices. The idea is to
consider a (smooth) test function $\varphi $ and look at a local maxima $%
\left( \hat{t},\hat{x}\right) $ of $u-\varphi $. Basic calculus implies that 
$Du\left( \hat{t},\hat{x}\right) =D\varphi \left( \hat{t},\hat{x}\right)
,\,D^{2}u\left( \hat{t},\hat{x}\right) \leq D\varphi \left( \hat{t},\hat{x}%
\right) $ and, from degenerate ellipticity,%
\begin{equation}
\partial _{t}\varphi +F\left( \hat{t},\hat{x},u,D\varphi ,D^{2}\varphi
\right) \leq 0.  \label{Gxbar}
\end{equation}%
This suggests to define a \textit{viscosity supersolution} (at the point $%
\left( \hat{x},\hat{t}\right) $) to $\partial _{t}+F=0$ as a continuous
function $u$ with the property that (\ref{Gxbar}) holds for any test
function. Similarly, \textit{viscosity subsolutions} are defined by
reversing inequality in (\ref{Gxbar}); \textit{viscosity solutions} are both
super- and subsolutions. A different point of view is to note that $u\left(
t,x\right) \leq u\left( \hat{t},\hat{x}\right) -\varphi \left( \hat{t},\hat{x%
}\right) +\varphi \left( t,x\right) $ for $\left( t,x\right) $ near $\left( 
\hat{t},\hat{x}\right) $. A simple Taylor expansion then implies%
\begin{equation}
u\left( t,x\right) \leq u\left( \hat{t},\hat{x}\right) +a\left( t-\hat{t}%
\right) +p\cdot \left( x-\hat{x}\right) +\frac{1}{2}\left( x-\hat{x}\right)
^{T}\cdot X\cdot \left( x-\hat{x}\right) +o\left( \left\vert \hat{x}%
-x\right\vert ^{2}+\left\vert \hat{t}-t\right\vert \right)  \label{EqTaylor}
\end{equation}%
as $\left\vert \hat{x}-x\right\vert ^{2}+\left\vert \hat{t}-t\right\vert
\rightarrow 0$ with $a=\partial _{t}\varphi \left( \hat{t},\hat{x}\right) ,$ 
$p=D\varphi \left( \hat{t},\hat{x}\right) ,$ $X=D^{2}\varphi \left( \hat{t},%
\hat{x}\right) $. Moreover, if (\ref{EqTaylor}) holds for some $\left(
a,p,X\right) $ and $u$ is differentiable, then $a=\partial _{t}u\left( \hat{t%
},\hat{x}\right) ,$ $p=Du\left( \hat{t},\hat{x}\right) ,$ $X\leq
D^{2}u\left( \hat{t},\hat{x}\right) $, hence by degenerate ellipticity%
\begin{equation*}
\partial _{t}\varphi +F\left( \hat{t},\hat{x},u,p,X\right) \leq 0\text{.}
\end{equation*}%
Pushing this idea further leads to a definition of viscosity solutions based
on a generalized notion of \textquotedblleft $\left( \partial
_{t}u,Du,D^{2}u\right) $" for nondifferentiable $u$, the so-called parabolic
semijets, and it is a simple exercise to show that both definitions are
equivalent. The resulting theory (existence, uniqueness, stability, ...) is
without doubt one of the most important recent developments in the field of
partial differential equations. As a typical result\footnote{$\mathrm{BUC}%
\left( \dots \right) $ denotes the space of bounded, uniformly continuous
functions.}, the initial value problem $\left( \partial _{t}+F\right)
u=0,\,u\left( 0,\cdot \right) =u_{0}\in \mathrm{BUC}\left( \mathbb{R}%
^{e}\right) $ has a unique solution in $\mathrm{BUC}\left( [0,T]\times 
\mathbb{R}^{e}\right) $ provided $F=F(t,x,u,Du,D^{2}u)$ is continuous,
degenerate elliptic, proper (i.e. increasing in the $u$ variable) and
satisfies a (well-known) technical condition\footnote{%
(3.14) of the User's Guide \cite{MR1118699UserGuide}.}. In fact, uniqueness
follows from a stronger property known as \textit{comparison:} assume $u$
(resp.\ $v$) is a supersolution (resp.\ subsolution) and $u_{0}\geq v_{0}$;
then $u\geq v$ on $[0,T]\times \mathbb{R}^{e}$. A key feature of viscosity
theory is what workers in the field simply call \textit{stability properties}%
. For instance, it is relatively straight-forward to study $\left( \partial
_{t}+F\right) u=0$ via a sequence of approximate problems, say $\left(
\partial _{t}+F^{n}\right) u^{n}=0$, provided $F^{n}\rightarrow F$ locally
uniformly and some apriori information on the $u^{n}$ (e.g.\ locally uniform
convergence, or locally uniform boundedness\footnote{%
What we have in mind here is the \textit{Barles--Perthame method of
semi-relaxed limits} \cite{MR2179357FS}.}. Note the stark contrast to the
classical theory where one has to control the actual derivatives of $u^{n}$.

The notion of stability is also central to rough path theory. Let $y_{0}\in 
\mathbb{R}^{e},V,W=\left( W_{i}\right) _{i=1,\ldots ,d}$ be (collections of)
vector fields on $\mathbb{R}^{e}$. Using rough path theory\footnote{%
Young theory is actually good enough for $p<2;$ see \cite{lyons-94}$.$}, one
can speak of solutions to 
\begin{equation}
dy_{t}=V\left( y_{t}\right) d\xi +W\left( y_{t}\right) d\mathbf{z}_{t},\text{
}y\left( 0\right) =y_{0}\in \mathbb{R}^{e}  \label{Eq_ODE_drift}
\end{equation}%
if the weak geometric rough path $\mathbf{z\in }C^{p\text{-var}}\left( \left[
0,T\right] ,G^{\left[ p\right] }\left( \mathbb{R}^{d}\right) \right) $ and $%
\xi \in C^{q\text{-var}}\left( \left[ 0,T\right] ,\mathbb{R}\right) $ have
"complementary Young regularity" $\frac{1}{p}+\frac{1}{q}>1$: then $\left(
\xi ,\mathbf{z}\right) $ can be seen as "time-space" rough path, i.e.\ an
element of $C^{\max \left( p,q\right) }\left( \left[ 0,T\right] ,G^{\left[
\max (p,q)\right] }\left( \mathbb{R\oplus R}^{d}\right) \right) $ since all
necessary cross iterated integrals between $\mathbf{z}$ and $\xi $ are
well-defined (using Young integration; \cite{frizvictoirbook},\cite%
{MR2228694}). Now any sequence $\left( \xi ^{n},z^{n}\right) _{n}\subset C^{1%
\text{-var}}\left( \left[ 0,T\right] ,\mathbb{R}\right) \times C^{1\text{-var%
}}\left( \left[ 0,T\right] ,\mathbb{R}^{d}\right) $ leads to a sequence of
ODE solutions $\left( y^{n}\right) _{n}$ of%
\begin{equation*}
dy_{t}^{n}=V\left( y_{t}^{n}\right) d\xi _{t}^{n}+\sum_{i=1}^{d}W_{i}\left(
y_{t}^{n}\right) dz_{t}^{n,i}
\end{equation*}%
(again assuming the vector fields are regular enough) and we call any limit
point in uniform topology of $\left( y^{n}\right) _{n}$ a solution of the RDE%
\begin{equation*}
dy_{t}=V\left( y_{t}\right) d\xi _{t}+W\left( y_{t}\right) d\mathbf{z}_{t},
\end{equation*}%
if $\left( \xi ^{n},z^{n}\right) $ converges to $\left( \xi ,\mathbf{z}%
\right) $ in the sense%
\begin{eqnarray}
\sup_{n}\left\vert \left\vert S_{\left[ p\right] }\left( z^{n}\right)
\right\vert \right\vert _{p-\text{var}}+\sup_{n}\left\vert \left\vert S_{%
\left[ q\right] }\left( \xi ^{n}\right) \right\vert \right\vert _{q-\text{var%
}} &<&\infty  \label{EqCondYoungPair} \\
d_{0}\left( S_{\left[ p\right] }\left( z^{n}\right) ,\mathbf{z}\right)
+\left\vert \xi ^{n}-\xi \right\vert _{\infty ;\left[ 0,T\right] }
&\rightarrow &0\text{ as }n\rightarrow \infty .  \notag
\end{eqnarray}%
Here, $S_{\left[ p\right] }$ denotes the canonical lift to a geometric step-$%
\left[ p\right] $ rough path (i.e.\ given by Riemann-Stieltjes integration),%
\begin{equation}
S_{\left[ p\right] }\left( z^{n}\right)
_{s,t}=1+\int_{s}^{t}dz_{u_{1}}^{n}+\cdots +\int_{s\leq u_{1}\leq \cdots
\leq u_{\left[ p\right] }\leq t}dz_{u_{1}}^{n}\otimes \cdots \otimes dz_{u_{%
\left[ p\right] }}^{n}.  \label{EqLift}
\end{equation}%
If there exists a unique solution we denote it by $\pi _{V,\left( W\right)
}\left( 0,x;\left( \xi ,\mathbf{z}\right) \right) $ to emphasize dependence
on the initial condition $y_{0}$, the rough path $\left( \xi ,\mathbf{z}%
\right) $, the vector field $V$ and the collection of vector fields $%
W=\left( W_{i}\right) _{i=1}^{d}$. A variation of Lyons' limit theorem, \cite%
{frizvictoirbook}, gives sufficient\footnote{%
...and essentially sharp (see \cite{Davie06:DiscreteApproxRoughPaths}). We
remark also that one can slightly improve on regularity assumption of the
vector fields by reproving the Universal Limit theorem for RDEs with drift
instead of using the "space-time" rough path $\left( \mathbf{z},\xi \right) $%
} conditions for existence and uniqueness of such RDE solutions.

\section{Two examples}

In this section we sketch our approach on two examples, ODEs and RDEs
(resp.\ SDEs when take the rough path lift of a stochastic process).
Therefore$\ $let us introduce some notation: for fixed $\Delta >0$ and $t\in %
\left[ 0,T\right] $ set\footnote{$\left\lfloor .\right\rfloor $ denotes the
lower floor function.} $t_{\Delta }=\left\lfloor t/\Delta \right\rfloor
\Delta $ and $t^{\Delta }=\left\lfloor t/\Delta \right\rfloor \Delta +\Delta 
$ (i.e.\ $\left[ t_{\Delta },t^{\Delta }\right) $ is the interval in the
partition of $\left[ 0,T\right] $ of constant mesh size $\Delta $ that
contains $t$). Motivated by \cite{KrylovGyongy03:OnSplittingUp} define two
time changes

\begin{equation*}
a\left( \Delta ,t\right) \bigskip =\left\{ 
\begin{array}{cc}
t_{\Delta }+2\left( t-t_{\Delta }\right) , & t_{\Delta }\leq t\leq t_{\Delta
}+\Delta /2 \\ 
t^{\Delta }, & t_{\Delta }+\Delta /2<t\leq t^{\Delta }%
\end{array}%
\right. ,\text{ }b\left( \Delta ,t\right) \bigskip =a\left( \Delta ,t+\frac{%
\Delta }{2}\right) \text{.}
\end{equation*}%
That is, $a\left( \Delta ,.\right) $ runs on the first half of each
intervall $\left[ t_{\Delta },t^{\Delta }\right) $ with double speed from $%
t_{\Delta }$ to $t^{\Delta }$ and stays still in the second half, wheras $%
b\left( \Delta ,.\right) $ does this in opposite order; also the paths $%
\left( a\left( \Delta ,\cdot \right) ,b\left( \Delta ,\cdot \right) \right) $
converge in $\left( 1+\varepsilon \right) $-variation against the path $%
id_{2}:t\longmapsto \left( t,t\right) $ for every $\varepsilon >0$.

Further, for given $n\geq 1$ denote by $D^{n}$ the partition $\left\{ \frac{k%
}{n}T,k=0,\ldots ,n\right\} $ of $\left[ 0,T\right] $.

The following two examples were shown to us by Terry Lyons at the IRTG SMCP
Summer School 2009 in Chorin:

\subsection{Splitting ODEs}

Let $V,W\in Lip^{1}\left( \mathbb{R}^{e},\mathbb{R}^{e}\right) $. We are
interested in splitting of the ODE 
\begin{equation}
dy_{t}=V\left( y_{t}\right) dt+W\left( y_{t}\right) dt,\text{ }y\left(
0\right) =y_{0}\in \mathbb{R}^{e}.  \label{EqODE}
\end{equation}%
As in the introduction, denote the solution of $\left( \ref{EqODE}\right) $
as $\pi _{V,W}\left( 0,y_{0};id_{2}\right) $. Classic Lie splitting
corresponds to the approximation of the path $id_{2}$ by the sequence of
paths $t\longmapsto \left( a\left( n^{-1},t\right) ,b\left( n^{-1},t\right)
\right) $. Therefore let $y^{n}$ be the ODE solution of%
\begin{equation*}
dy_{t}^{n}=V\left( y_{t}\right) da\left( n^{-1},t\right) +W\left(
y_{t}\right) db\left( n^{-1},t\right) ,\text{ }y\left( 0\right) =y_{0}\in 
\mathbb{R}^{e}\text{,}
\end{equation*}%
i.e.\ $y^{n}=$ $\pi _{V,W}\left( 0,y_{0};\left( a\left( n^{-1},\cdot \right)
,b\left( n^{-1},\cdot \right) \right) \right) $. For brevity, define the
solution operator $\left\{ \mathbf{P}_{s,t}^{n;V},0\leq s\leq t\leq
T\right\} $ and $\left\{ \mathbf{P}_{u}^{V},0\leq u\leq T\right\} $ mapping
points in $\mathbb{R}^{e}$ to $\mathbb{R}^{e}$ as%
\begin{equation*}
\mathbf{P}_{s,t}^{n,V}\left( x\right) :=\pi _{V}\left( s,x;a\left(
n^{-1},\cdot \right) \right) _{t}\text{ and }\mathbf{P}_{t-s}^{V}\left(
x\right) :=\pi _{V}\left( 0,x;id_{1}\right) _{t-s}=\pi _{V}\left(
s,x;id_{1}\right) _{t}
\end{equation*}%
(here $id_{1}:t\longmapsto t$ and $\mathbf{P}^{V}$ is a one parameter
semigroup due to homogenity of $id_{1}$); similarly define $\mathbf{Q}^{n,W}$
and $\mathbf{Q}^{W}$. Firstly, note that by defintion of $a$ and $b,$ 
\begin{equation}
y^{n}\left( t+\frac{1}{n}\right) =\left[ \mathbf{Q}_{t,t+1/n}^{n;W}\circ 
\mathbf{P}_{t,t+1/n}^{n;V}\right] \left( y^{n}\left( t\right) \right)
\label{Eq_Split}
\end{equation}%
for $t$ being a multiple of $T/n.$ Secondly, it is intuitively clear, and an
easy exercise to show, that $\mathbf{P}_{s,t}^{n,V}\equiv \mathbf{P}%
_{t-s}^{V}$ resp.\ $\mathbf{Q}_{s,t}^{n,W}\equiv \mathbf{Q}_{t-s}^{W}$ for $%
s,t\ $points on the dissection $D^{n}$. Since $\left( a\left( n^{-1},\cdot
\right) ,b\left( n^{-1},\cdot \right) \right) $ converges to the path $%
id_{2}:t\longmapsto \left( t,t\right) $ in the sense of $\left( \ref%
{EqCondYoungPair}\right) $ (that is, $\xi ^{n}\left( t\right) =a\left(
n^{-1},t\right) $ and $z^{n}\left( t\right) =b\left( n^{-1},t\right) $, $%
q=p=1$), we know that 
\begin{equation*}
\pi _{V,W}\left( 0,y_{0};\left( a\left( n^{-1},\cdot \right) ,b\left(
n^{-1},\cdot \right) \right) \right) =y^{n}\rightarrow y=\pi _{V,W}\left(
0,y_{0};id_{2}\right) \text{ as }n\rightarrow \infty
\end{equation*}%
in $\left\vert .\right\vert _{\infty ;\left[ 0,T\right] }$ norm. Using the
identity $\left( \ref{Eq_Split}\right) $ one recovers the \textquotedblright
classic Lie-splitting" 
\begin{equation*}
\left[ \mathbf{Q}_{1/n}^{W}\circ \mathbf{P}_{1/n}^{V}\right] ^{\left\lfloor
t/n\right\rfloor }\left( y_{0}\right) \rightarrow y_{t}\text{ as }%
n\rightarrow \infty \text{ for every }t\in \left[ 0,T\right]
\end{equation*}%
where $y$ is the ODE\ solution of $\left( \ref{EqODE}\right) $. Moreover,
the convergence holds in $\left\vert .\right\vert _{\infty ;\left[ 0,T\right]
}$ norm and by interpolation even in stronger $\left( 1+\varepsilon \right) $%
-variation norm for every $\varepsilon >0$.

Note that no rough path theory is needed and everything follows from
continuouity in the sense of $\left( \ref{EqCondYoungPair}\right) $ (with $%
q=1$) which can be established by elementary computations; see \cite[Chapter
3]{frizvictoirbook}. Let us remark that using Young integration theory one
can push this method to driving signals of finite $p-$variation for $p\lneqq
2$. Paths of Brownian regularity or worse are outside the scope of Young
theory but one can use rough path results.

\subsection{Splitting RDEs}

Motivated by the above example we can ask for splitting for an RDE with
drift of the form%
\begin{equation}
dy_{t}=V\left( y_{t}\right) d\xi +W\left( y_{t}\right) d\mathbf{z}_{t},\text{
}y\left( 0\right) =y_{0}\in \mathbb{R}^{e},  \label{Eq_drift}
\end{equation}%
where $\mathbf{z\in }C^{p\text{-var}}\left( \left[ 0,T\right] ,G^{\left[ p%
\right] }\left( \mathbb{R}^{d}\right) \right) $ and $\xi \mathbf{\in }C^{1%
\text{-var}}\left( \left[ 0,T\right] ,G^{\left[ p\right] }\left( \mathbb{R}%
^{d}\right) \right) $; the path $\left( \xi ,\mathbf{z}\right) $ has
trivially "complementary Young regularity" and if $V\in Lip^{\gamma }\left( 
\mathbb{R}^{e},\mathbb{R}^{e}\right) $ and $W=\left( W_{i}\right) \subset
Lip^{\tilde{\gamma}}\left( \mathbb{R}^{e},\mathbb{R}^{e}\right) $ for $%
\gamma >1,\tilde{\gamma}>p$ then there exists a unique solution $\pi
_{V,\left( W\right) }\left( 0,y_{0};\left( \xi ,\mathbf{z}\right) \right) $
to $\left( \ref{Eq_drift}\right) $. For later use we show

\begin{lemma}
\label{Lemma_Joint_RP_convergence}Let $\mathbf{z\in }C^{p\text{-var}}\left( %
\left[ 0,T\right] ,G^{\left[ p\right] }\left( \mathbb{R}^{d}\right) \right)
, $ $\xi \in C^{q\text{-var}}\left( \left[ 0,T\right] ,\mathbb{R}\right) $.
If we define $\xi ^{\Delta }\left( t\right) =\xi \left( a\left( \Delta
,t\right) \right) \in C^{q\text{-var}}\left( \left[ 0,T\right] ,\mathbb{R}%
\right) $, $\mathbf{z}^{\Delta }\left( t\right) =\mathbf{z}\left( b\left(
\Delta ,t\right) \right) \in C^{p\text{-var}}\left( \left[ 0,T\right] ,G^{%
\left[ p\right] }\left( \mathbb{R}^{d}\right) \right) $ then%
\begin{eqnarray*}
\sup_{\Delta >0}\left\vert \left\vert \mathbf{z}^{\Delta }\right\vert
\right\vert _{p\text{-var}}+\sup_{\Delta >0}\left\vert \xi ^{\Delta
}\right\vert _{q\text{-var}} &<&\infty \\
d_{0}\left( \mathbf{z}^{\Delta },\mathbf{z}\right) +\left\vert \xi ^{\Delta
}-\xi \right\vert _{\infty ;\left[ 0,T\right] } &\rightarrow &0\text{ as }%
\Delta \rightarrow 0.
\end{eqnarray*}
\end{lemma}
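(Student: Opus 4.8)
The plan is to observe that $\xi^{\Delta}$ and $\mathbf{z}^{\Delta}$ are \emph{time-reparametrisations}. Writing $a_{\Delta}:=a(\Delta,\cdot)$ and $b_{\Delta}:=b(\Delta,\cdot)$, both are continuous non-decreasing maps of $[0,T]$ into itself, with $a_{\Delta}$ surjective onto $[0,T]$, and by construction $\xi^{\Delta}=\xi\circ a_{\Delta}$ while $\mathbf z^{\Delta}=\mathbf z\circ b_{\Delta}$ in the sense $\mathbf z^{\Delta}_{s,t}=\mathbf z_{b_{\Delta}(s),\,b_{\Delta}(t)}$; the only non-cosmetic annoyance, that $b_{\Delta}$ overshoots $T$ slightly near the right endpoint, is removed by extending $\mathbf z$ (and $\xi$) constantly past $T$, which changes neither the norms nor the limits involved. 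I would then invoke two standard facts about reparametrisations of weak geometric rough paths; see \cite{frizvictoirbook}. First, the homogeneous $p$-variation norm is invariant under precomposition with a continuous non-decreasing surjection, and a continuous non-decreasing time-change of an element of $C^{p\text{-var}}([0,T],G^{[p]}(\mathbb R^d))$ lies again in that space. Second, precomposition is continuous for the uniform rough-path metric $d_{0}$: if $\phi_{n}\to\mathrm{id}$ uniformly with the $\phi_{n}$ continuous and non-decreasing, then $\mathbf x\circ\phi_{n}\to\mathbf x$ in $d_{0}$ for any continuous rough path $\mathbf x$ (and analogously for real-valued paths in $|\cdot|_{\infty}$).

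The first display is then immediate from the first fact: $\|\mathbf z^{\Delta}\|_{p\text{-var}}\le\|\mathbf z\|_{p\text{-var}}$ and $|\xi^{\Delta}|_{q\text{-var}}=|\xi|_{q\text{-var}}$ for \emph{every} $\Delta>0$, so no $\Delta$-dependence survives; this simultaneously records that $\mathbf z^{\Delta}\in C^{p\text{-var}}([0,T],G^{[p]}(\mathbb R^d))$ and $\xi^{\Delta}\in C^{q\text{-var}}([0,T],\mathbb R)$. One technical point worth spelling out is that $a_{\Delta},b_{\Delta}$ are only weakly increasing, so a partition of the source maps to a \emph{weakly} ordered tuple; but continuity of $\mathbf z$ (resp.\ $\xi$) lets one delete the repeated nodes without altering the corresponding variation sum, so the supremum defining the $p$-variation is unchanged.

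For the second display one reads off from the explicit formula that each point is moved by at most one mesh-cell, i.e.\ $|a_{\Delta}(t)-t|\le\Delta$ and hence $|b_{\Delta}(t)-t|=|a_{\Delta}(t+\tfrac{\Delta}{2})-t|\le\tfrac{3}{2}\Delta$, uniformly in $t$; thus $a_{\Delta},b_{\Delta}\to\mathrm{id}$ uniformly as $\Delta\to0$ at rate $O(\Delta)$. Since $\xi$ is continuous on the compact interval $[0,T]$ it is uniformly continuous, whence $|\xi^{\Delta}-\xi|_{\infty;[0,T]}\le\omega_{\xi}(\Delta)\to0$. For $\mathbf z$ one uses that $(s,t)\mapsto\|\mathbf z\|_{p\text{-var};[s,t]}^{p}$ is a continuous control vanishing on the diagonal, so $\mathbf z$ is uniformly continuous as a $G^{[p]}(\mathbb R^d)$-valued path; together with the $O(\Delta)$ closeness of $b_{\Delta}$ to the identity this gives $d_{0}(\mathbf z^{\Delta},\mathbf z)\to0$ (the $O(\Delta)$ shift of the base point is of the same order and absorbed by the same modulus). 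Adding the two estimates proves the claim. I expect no real obstacle: the ``hard part'' is just recognising the objects as time-changes and quoting reparametrisation-invariance of the homogeneous rough-path norm, after which everything reduces to the elementary bound $\|a_{\Delta}-\mathrm{id}\|_{\infty},\|b_{\Delta}-\mathrm{id}\|_{\infty}=O(\Delta)$ and uniform continuity of $\mathbf z$ and $\xi$.
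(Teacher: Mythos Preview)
Your proposal is correct and follows essentially the same approach as the paper: reparametrisation-invariance of the variation norm for the first display, and continuity of $\xi$ and $\mathbf z$ for the second. The only cosmetic difference is that for the uniform convergence the paper argues via interpolation (pointwise convergence plus the uniform variation bounds from the first part), whereas you go directly through uniform continuity of $\xi$ and $\mathbf z$ on the compact interval combined with $\|a_\Delta-\mathrm{id}\|_\infty,\|b_\Delta-\mathrm{id}\|_\infty=O(\Delta)$; both are standard and equivalent here. Your treatment of the endpoint overshoot of $b_\Delta$ and of the merely weakly-increasing nature of the time changes is more careful than the paper's, which leaves these points implicit.
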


\begin{proof}
First note that the variation norm is invariant under reparametrisation
which implies the first statement. For the second statement it is sufficient
to show pointwise converge (by interpalation pointwise convergence in
combination with uniform variation bounds implies convergence in supremum
norm). However, $\mathbf{z}$ and $\xi $ are, by assumption, both continuous
paths which gives pointwise convergence.
\end{proof}

Define 
\begin{equation*}
\xi ^{n}\left( t\right) :=\xi \left( a\left( n^{-1},t\right) \right) \text{
and }\mathbf{z}_{t}^{n}:=\mathbf{z}\left( b\left( n^{-1},t\right) \right) 
\text{.}
\end{equation*}%
Similar to the ODE example, define the solution operator $\left\{ \mathbf{P}%
_{s,t}^{n;V},0\leq s\leq t\leq T\right\} $ mapping points in $\mathbb{R}^{e}$
to $\mathbb{R}^{e}$ as $\mathbf{P}_{s,t}^{n,V}\left( x\right) :=\pi
_{V}\left( s,x;\xi ^{n}\right) _{t}$ and the operators $\mathbf{P}^{V}$,$%
\mathbf{Q}^{n,W}$ and $\mathbf{Q}^{W}$. It remains to show that 
\begin{equation*}
\mathbf{P}_{s,t}^{n;V}\equiv \mathbf{P}_{s,t}^{V}\text{ and }\mathbf{Q}%
_{s,t}^{n;W}\equiv \mathbf{Q}_{s,t}^{W}\text{ for }s,t\in D^{n}
\end{equation*}%
(in contrast with the ODE example, $\mathbf{P}^{V}$ and $\mathbf{Q}^{W}$ are
now two parameter semigroups due to the time-inhomogenity of $\xi $ and $%
\mathbf{z}$). Since $G^{\left[ p\right] }\left( \mathbb{R}^{d}\right) $ is a
geodesic space, there exists a sequence of paths (concatenations of
geodesics on the sequence of dissections $D^{m}$), $\left( z^{m}\right)
_{m}\subset C^{1-\text{var}}\left( \left[ 0,T\right] ,\mathbb{R}^{d}\right) $
with $S_{\left[ p\right] }\left( z_{s,t}^{m}\right) =\mathbf{z}_{s,t}$ for $%
s,t\in D^{m}$, such that%
\begin{eqnarray*}
\sup_{m}\left\vert \left\vert S_{\left[ p\right] }\left( z^{m}\right)
\right\vert \right\vert _{p-\text{var}} &<&\infty , \\
d_{0}\left( S_{\left[ p\right] }\left( z^{m}\right) ,\mathbf{z}\right)
&\rightarrow &0\text{ as }m\rightarrow \infty ,
\end{eqnarray*}%
with $S_{\left[ p\right] }$ as in $\left( \ref{EqLift}\right) $. Now define $%
\mathbf{z}^{n,m}\left( t\right) :=\mathbf{z}^{m}\left( a\left(
n^{-1},t\right) \right) $ and note that $\mathbf{z}^{m}$ and $\mathbf{z}%
^{n,m}$ have bounded 1-variation ($\mathbf{z}^{m}$ by construction, $\mathbf{%
z}^{n,m}$ because the variation norm is invariant under reparametrisation).
Hence, for $m,n$ fix we deal with an ODE as in the example above and
therefore%
\begin{equation*}
\pi _{W}\left( s,x;\mathbf{z}^{n,m}\right) _{s,t}=\pi _{W}\left( s,x;\mathbf{%
z}^{m}\right) _{s,t}\text{ }
\end{equation*}%
for $s,t\in D^{n}$. Keeping $n$ fixed and letting $m\rightarrow \infty ,$
the LHS converges to $\pi _{W}\left( s,x;\mathbf{z}^{n}\right) _{s,t}$ by
lemma $\left( \ref{Lemma_Joint_RP_convergence}\right) $ and Lyons' limit
theorem and the RHS to $\pi _{W}\left( s,x;\mathbf{z}\right) _{s,t}$; we can
conclude $\mathbf{Q}_{s,t}^{n;W}\equiv \mathbf{Q}_{s,t}^{W}$ for $s,t\in
D^{n}$. A similar argument shows $\mathbf{P}_{s,t}^{n;V}\equiv \mathbf{P}%
_{s,t}^{V}$ for $s,t\in D^{n}$. We finish the argument in the same way as in
the previous example: solutions of $\pi _{V,\left( W\right) }\left(
s,x;\left( \xi ^{n},\mathbf{z}^{n}\right) \right) $ converge uniformly to $%
\pi _{V,\left( W\right) }\left( s,x;\left( \xi ,\mathbf{z}\right) \right) $.
On neighbouring points $s,t\in D^{n},$ $\pi _{V,\left( W\right) }\left(
s,x;\left( \xi ^{n},\mathbf{z}^{n}\right) \right) _{s,t}$ can be idenitified
as 
\begin{equation*}
\left[ \mathbf{Q}_{s,t}^{n,W}\circ \mathbf{P}_{s,t}^{n,V}\right] \left(
x\right) =\left[ \mathbf{Q}_{s,t}^{W}\circ \mathbf{P}_{s,t}^{V}\right]
\left( x\right) .
\end{equation*}%
Hence, for every $t\in \left[ 0,T\right] $%
\begin{equation*}
y_{t}^{n\text{;Split}}:=\dprod\limits_{k=0}^{\left\lfloor t/n\right\rfloor
-1}\left[ \mathbf{Q}_{k/n,\left( k+1\right) /n}^{W}\circ \mathbf{P}%
_{k/n,\left( k+1\right) /n}^{V}\right] \left( y_{0}\right) \rightarrow \pi
_{V,\left( W\right) }\left( 0,y_{0};\left( \xi ,\mathbf{z}\right) \right)
_{t}\text{ as }n\rightarrow \infty \text{ a.s.}
\end{equation*}%
Moreover, $\sup_{n}\left\vert y^{n\text{;Split}}\right\vert _{p-\text{var;}%
\left[ 0,T\right] }<\infty $ which implies by interpolation convergence in $%
\left( p+\epsilon \right) $-variation norm of $y^{n\text{;Split}}$ for every 
$\varepsilon >0$.

\begin{remark}
Similarly, one shows convergence of a splitting scheme, running the
semigroups in the different order $\mathbf{P}^{V}\circ \mathbf{Q}^{W}$. Note
also, that we restrict ourselves in this article to Lie splitting schemes
but the methods can be easily modified to include Strang splitting (see \cite%
{McLachlanQuispel02:SplittingMethods} for the difference between Lie- and
Strang splitting schemes) by using an appropiate modification of the time
change. Further, we just deal with equidistant partitions. Numeruous
variations of all this are possible (as long as one can show convergence in
a rough path topology of the approximating sequence) and such modifications
are of great importance for rates of convergence - a topic which we hope to
address in future work.
\end{remark}

\section{Rough partial differential equations}

One could hope for similar results for SPDEs in Stratonovich form, 
\begin{eqnarray}
du &=&F\left( t,x,u,Du,D^{2}u\right) dt+\sum_{k=1}^{d}\Lambda _{k}\left(
t,x,u,Du\right) \circ dB^{k}  \label{EqRPDEXi} \\
u\left( 0,x\right) &=&u_{0}\left( x\right) \text{ on }\mathbb{R}^{e}\text{ \ 
}  \notag
\end{eqnarray}%
Indeed, splitting for equation $\left( \ref{EqRPDEXi}\right) $ has been
treated in \cite{KrylovGyongy03:OnSplittingUp} for the case of $F$ being a
linear operator. To give meaning to $\left( \ref{EqRPDEXi}\right) $ for
nonlinear $F$ one can introduce the concept of (rough) viscosity solutions
(cf.\ \cite{LionsSoug98:ViscSPDEfirstOne,
LionsSoug98:ViscSPDEsecondOne,LionsSoug98:ViscSPDEsecondOne,LionsSoug00:ViscSPDESeminlinearNoise,LionsSoug00:ViscSPDEUniqueness}
and \cite{FrizOberhauserCaruan09:ViscSPDE} or \cite%
{FrizOberhauser10:StabilityFiltering}). Let us informally discuss the idea
before we give the precise definition in section \ref{SectionMainResult}: a
real-valued, bounded and continuous function $u$ on $\left[ 0,T\right]
\times \mathbb{R}^{e}$ is called a solution if it is the uniform limit
(locally on compacts) of (standard) viscosity solutions $\left( u^{n}\right) 
$ of the equations 
\begin{eqnarray*}
du^{n} &=&F\left( t,x,u^{n},Du^{n},D^{2}u^{n}\right) d\xi
^{n}+\sum_{i=1}^{d}\Lambda _{k}\left( t,x,u,Du\right) dz^{n;i}, \\
u^{n}\left( 0,x\right) &=&u_{0}\left( x\right) \text{ on }\left( 0,T\right]
\times \mathbb{R}^{e},
\end{eqnarray*}%
where $\left( z^{n}\right) \subset C^{\infty }\left( \left[ 0,T\right] ,%
\mathbb{R}^{d}\right) $ and $\left( \xi ^{n}\right) \subset C^{\infty
}\left( \left[ 0,T\right] ,\mathbb{R}\right) $ are sequences of smooth
driving signals, converging to a weak geometric rough path $\left( \xi ,%
\mathbf{z}\right) $ (see $\left( \ref{EqCondYoungPair}\right) $). Formally
we write 
\begin{equation*}
du=F\left( t,x,u,Du,D^{2}u\right) d\xi +\Lambda \left( t,x,u,Du\right) d%
\mathbf{z.}
\end{equation*}%
(a solutions is then also a solution in the sense of \cite%
{LionsSoug98:ViscSPDEsecondOne} when $\mathbf{z}$ is an enhanced Brownian
motion). In the case when the rough path $\mathbf{z}$ is an enhanced
Brownian motion and $\xi \left( t\right) =t$, this gives a natural concept
of Stratonovich solutions to $\left( \ref{EqRPDEXi}\right) $. However, to
apply the methods outlined in the sections above to derive a splitting
method, care has to be taken: firstly, in the RDE case \textquotedblleft any
sequence" $\left( \xi ^{n},z^{n}\right) $ of smooth paths converging to $%
\left( \xi ,\mathbf{z}\right) $ gives rise to a solution, but in the RPDE
case, if $\dot{\xi}_{t}^{n}<0$, one can not expect to treat even the simple
second order equation%
\begin{equation}
du^{n}=D^{2}u^{n}d\xi ^{n}  \label{EqSmoothPDE}
\end{equation}%
since it is no longer degenerate elliptic ($\dot{\xi}_{t}^{n}<0$ amounts to
running the heat equation backwards in time). Secondly, assume we want to
use Lie-splitting on dyadic partitions, i.e.\ approximate $\xi \left(
t\right) =t$ with $\xi ^{n}\left( t\right) =a\left( n^{-1},t\right) $. This
introduces a discontinuous time-dependence ($\dot{\xi}^{n}$ does not exists
on points of the partition)\footnote{%
One could avoid discontinuous time-dependence by restricting the class of
splitting schemes (i.e. the class with $\left( \dot{\xi}^{n}\right) \subset
C^{1}$). However, nearly all popular schemes (Strang, Lie, etc.) would then
not be covered.} in equation $\left( \ref{EqSmoothPDE}\right) .$ Such
time-discontinuities are in general difficult to handle in a viscosity
setting. Thirdly, one has to show continuous dependence of the solution%
\footnote{%
The results in \cite{FrizOberhauserCaruan09:ViscSPDE} and \cite%
{FrizOberhauser10:StabilityFiltering} do not cover this due the
time-discontinuity of the approximating sequence $\left( d\xi ^{n}\right) $.}
of $\left( \ref{EqRPDEXi}\right) ,$ not only on $\mathbf{z}$ but continuous
dependence on $\left( \xi ,\mathbf{z}\right) $ in a rough path sense.

The first point is dealt with by characterizing the class of legit
approximations $\xi ^{n},$ leading to the path space $C_{0}^{1-\text{var;}%
+}\left( \left[ 0,T\right] ,\mathbb{R}\right) $, described in section \ref%
{section_C1-var}. Section \ref{SecPDEdiscontTime} deals with nonlinear PDEs
with a discontinuity-in-time introduced by $\xi ^{n}$ and section \ref%
{SectionMainResult} gives the precise definitions of rough viscosity
solutions and stability, contains the main theorem and examples of stable
RPDEs.

\subsection{The space $C_{0}^{1-\text{var,}+}\left( \left[ 0,T\right] ,%
\mathbb{R}\right) $\label{section_C1-var}}

As pointed out above, we have to avoid to fall outside the scope of
(degenerate) elliptic PDEs. Using the notation $C^{0,1-\text{var}}\left( %
\left[ 0,T\right] ,\mathbb{R}\right) $ for the closure of the space of
smooth paths in variation norm ($\overline{C^{\infty }}^{\left\vert
.\right\vert _{1-var}}\left( \left[ 0,T\right] ,\mathbb{R}\right) $) we
recall that 
\begin{eqnarray*}
W_{0}^{1,1}\left( \left[ 0,T\right] ,\mathbb{R}\right) &\equiv &\left\{ x:%
\left[ 0,T\right] \rightarrow \mathbb{R}\text{, }\exists y\in L^{1}\left( %
\left[ 0,T\right] ,\mathbb{R}\right) \text{ s.t.\ }x\left( t\right)
=\int_{0}^{t}y\left( u\right) du\right\} \\
&=&\left\{ x:\left[ 0,T\right] \rightarrow \mathbb{R}\text{, }x\text{
absolutely continuous, }x\left( 0\right) =0\right\} \\
&=&\overline{\left\{ x:\left[ 0,T\right] \rightarrow \mathbb{R}\text{, }x\in
C^{\infty },\text{ }x\left( 0\right) =0\right\} }^{\left\vert .\right\vert
_{1-\text{var}}} \\
&\equiv &C_{0}^{0,1-var}\left( \left[ 0,T\right] ,\mathbb{R}\right)
\varsubsetneq C_{0}^{1\text{-var}}\left( \left[ 0,T\right] ,\mathbb{R}%
\right) .
\end{eqnarray*}

\begin{definition}
$C_{0}^{1,+}\left( \left[ 0,T\right] ,\mathbb{R}\right) =\left\{ \xi \in
C_{0}^{1}\left( \left[ 0,T\right] ,\mathbb{R}\right) :\xi _{T}=T,\dot{\xi}%
>0\right\} $.
\end{definition}

Note that the paths $a\left( \Delta ,\cdot \right) $ and $b\left( \Delta
,\cdot \right) $ are not elements of $C_{0}^{1,+}\left( \left[ 0,T\right] ,%
\mathbb{R}\right) ,$ but elements of its closure,$\overline{\text{ }%
C_{0}^{1,+}}^{\left\vert .\right\vert _{\infty }}\left( \left[ 0,T\right] ,%
\mathbb{R}\right) ,$ in sup-norm. Working with $C_{0}^{1,+}$ enables us in
section below to give a short proof of existence, uniqueness and stability
of a solution to PDEs of the type $\partial _{t}u=F\dot{\xi}_{t}$ for paths $%
\xi \in \overline{\text{ }C_{0}^{1,+}}^{\left\vert .\right\vert _{\infty
}}\left( \left[ 0,T\right] ,\mathbb{R}\right) $.

\begin{proposition}
\label{PropApproxTime}Denote $C_{0}^{1-\text{var,}+}\left( \left[ 0,T\right]
,\mathbb{R}\right) =\overline{C_{0}^{1,+}}^{\left\vert .\right\vert _{\infty
}}\left( \left[ 0,T\right] ,\mathbb{R}\right) $. Then%
\begin{eqnarray*}
C_{0}^{1-\text{var,}+}\left( \left[ 0,T\right] ,\mathbb{R}\right) &=&\left\{
\xi _{t}\in C_{0}\left( \left[ 0,T\right] ,\mathbb{R}\right) :\xi _{T}=T%
\text{ and }\exists \dot{\xi}\in L^{1}\left( \left[ 0,T\right] ,\mathbb{R}%
_{\geq 0}\right) \text{,}\right. \\
&&\left. \exists a\in C_{0}^{1\text{-var}}\left( \left[ 0,T\right] ,\mathbb{R%
}_{\geq 0}\right) ,\text{ }a\text{ increasing, }\dot{a}=0\text{ a.s. and }%
\xi _{t}=a_{t}+\int_{0}^{t}\dot{\xi}_{u}du\right\}
\end{eqnarray*}%
and $C_{0}^{1-\text{var,}+}\left( \left[ 0,T\right] ,\mathbb{R}\right)
\varsubsetneq C_{0}^{1-\text{var}}\left( \left[ 0,T\right] ,\mathbb{R}%
\right) $.
\end{proposition}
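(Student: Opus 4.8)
The plan is to establish the set equality by two inclusions, together with the strictness at the end. For the inclusion "$\supseteq$": given $\xi_t = a_t + \int_0^t \dot\xi_u\,du$ with $a$ increasing, $\xi_T = T$, $\dot a = 0$ a.e., and $\dot\xi \in L^1([0,T],\mathbb{R}_{\ge0})$, I want to produce a sequence $\xi^n \in C_0^{1,+}$ with $\xi^n \to \xi$ uniformly. First I would regularize: mollify $\dot\xi$ to get smooth strictly positive $f_n$ with $f_n \to \dot\xi$ in $L^1$ (add a small constant $\varepsilon_n \to 0$ to enforce strict positivity), and replace the singular part $a$ by a smooth increasing approximation (e.g.\ convolve the measure $da$ with a mollifier, producing a smooth increasing function $a^n$ with $a^n \to a$ uniformly, or handle $a$ as a distribution function and approximate in sup-norm). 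Then set $\xi^n_t = c_n\big(a^n_t + \int_0^t f_n(u)\,du\big)$ where $c_n$ is a normalization constant chosen so that $\xi^n_T = T$; since the unnormalized value at $T$ tends to $a_T + \int_0^T \dot\xi = \xi_T = T$, we get $c_n \to 1$, and $\xi^n \to \xi$ uniformly. Each $\xi^n$ is $C^1$, starts at $0$, has strictly positive derivative, and hits $T$ at time $T$, so $\xi^n \in C_0^{1,+}$; hence $\xi \in \overline{C_0^{1,+}}^{|\cdot|_\infty}$.

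For the inclusion "$\subseteq$": take $\xi \in \overline{C_0^{1,+}}^{|\cdot|_\infty}$, so $\xi^n \to \xi$ uniformly with $\xi^n \in C_0^{1,+}$. Each $\xi^n$ is increasing (as $\dot\xi^n > 0$), so the uniform limit $\xi$ is increasing (nondecreasing), continuous, with $\xi_0 = 0$, $\xi_T = T$; in particular $\xi$ has finite total variation equal to $T$, so $\xi \in C_0^{1\text{-var}}$. Now decompose the monotone function $\xi$ via its Lebesgue decomposition: $\xi_t = a_t + \int_0^t \dot\xi_u\,du$ where $\dot\xi$ is the a.e.\ derivative (which lies in $L^1(\mathbb{R}_{\ge0})$ since $\xi$ is nondecreasing) and $a$ is the remaining part, which is nondecreasing, continuous (because $\xi$ is), and has $\dot a = 0$ a.e.\ by construction of the Lebesgue decomposition of a monotone function. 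This is exactly the claimed representation, and $a \in C_0^{1\text{-var}}$ since it is bounded and monotone with $a_0 = 0$.

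For the strict inclusion $C_0^{1\text{-var,}+} \varsubsetneq C_0^{1\text{-var}}$, I would simply exhibit a path in $C_0^{1\text{-var}}$ that is not nondecreasing, e.g.\ $\xi_t = \sin(2\pi t/T)\cdot(\text{something})$ adjusted to satisfy $\xi_0 = 0$, $\xi_T = T$ but dip below $0$ or decrease somewhere — any $C^1$ path with $\xi_T = T$, $\xi_0=0$ and $\dot\xi$ changing sign works, since every element of $C_0^{1\text{-var,}+}$ is monotone nondecreasing by the previous paragraph.

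The main obstacle I anticipate is the "$\subseteq$" direction, specifically verifying that the singular part $a$ of the Lebesgue decomposition is genuinely \emph{continuous} and that no mass is lost in the limit (i.e.\ that $\xi$ really is monotone and that its variation does not exceed $T$, ruling out oscillatory limits). Uniform limits of monotone functions are monotone, so this is clean; the continuity of $a$ follows from continuity of $\xi$ together with absolute continuity of $t \mapsto \int_0^t \dot\xi$. A secondary technical point is the construction of the smooth increasing approximation $a^n$ of the singular increasing part in the "$\supseteq$" direction while keeping $\dot\xi^n > 0$ everywhere — but adding the strictly positive smooth term $f_n \ge \varepsilon_n > 0$ takes care of strict positivity regardless of $a^n$, so this reduces to the standard fact that a continuous increasing function is a uniform limit of smooth increasing functions.
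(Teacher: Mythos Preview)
Your proof is correct, and the ``$\subseteq$'' direction (uniform limit of strictly increasing $C^1$ paths is nondecreasing continuous, then invoke the Lebesgue decomposition of a monotone function) is essentially identical to the paper's argument.

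For the ``$\supseteq$'' direction, however, you take a genuinely different route. The paper views $\xi/T$ as the cumulative distribution function of a random variable $X$ supported on $[0,T]$ and sets $\xi^{\varepsilon}$ to be ($T$ times) the distribution function of $X+\varepsilon N$ with $N$ an independent standard Gaussian; this single convolution simultaneously smooths the absolutely continuous and singular parts, gives a $C^{\infty}$ function with strictly positive derivative, and yields pointwise convergence, which is then upgraded to uniform convergence via a separate Dini-type lemma (pointwise convergence of monotone functions to a continuous limit is automatically uniform). Your approach is more hands-on and purely analytic: you mollify $\dot\xi$ and the singular part $a$ separately, add a vanishing positive constant to enforce $\dot\xi^{n}>0$, and renormalise so that $\xi^{n}_{T}=T$. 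The advantage of your construction is that uniform convergence comes for free from $L^{1}$-convergence of the derivatives and uniform convergence of $a^{n}$, with no auxiliary lemma needed; the paper's advantage is brevity---one line of probability replaces the bookkeeping of two separate approximations and a normalising constant. Both arguments require a small adjustment to guarantee the endpoint conditions $\xi^{n}_{0}=0$, $\xi^{n}_{T}=T$ exactly (your normalisation $c_{n}$ handles this cleanly; the paper is somewhat informal on this point since $X+\varepsilon N$ has mass outside $[0,T]$).
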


\begin{proof}
$\subset :$ Let $\left( \xi ^{\varepsilon }\right) $ be a Cauchy sequence
wrt.\ $\left\vert .\right\vert _{\infty }$. {}Since $\left( C_{0}\left( %
\left[ 0,T\right] ,\mathbb{R}\right) ,\left\vert .\right\vert _{\infty
}\right) $ is complete, $\xi ^{\varepsilon }$ converges uniformly to some $%
\xi \in C_{0}\left( \left[ 0,T\right] ,\mathbb{R}\right) $. This $\xi $ is
monotone (not necessarily strict) increasing and hence $\left\vert \xi
\right\vert _{1-\text{var};\left[ 0,T\right] }<\infty $ (recall that $\xi
_{T}=T$). Every function of finite $1$-variation is Lebesgue-a.e.
differentiable and has a representation of the form 
\begin{equation*}
\xi _{t}=a_{t}+\int_{\left[ 0,t\right] }\dot{\xi}_{u}du
\end{equation*}%
where $a$ is a function of $1$-variation with $\dot{a}=0$ Lebesgue-a.e. Now $%
\xi _{s,s+h}\geq 0,$ for every $h>0;$ $s\in \left[ 0,1\right) $. Hence we
have $a_{s,s+h}\geq -\int_{s}^{s+h}\dot{\xi}_{u}du$ and sending $%
h\rightarrow 0$ shows together with $\dot{a}=0$ a.s.\ that $a$ is monotone
increasing and this implies $\dot{\xi}_{u}\geq 0$ Lebesgue-a.e.

$\supset :F\left( t\right) :=\xi _{t}$ defines a continuous distribution
function on $\left[ 0,T\right] $ and let $X$ be a random variable with
distribution $F.$ For $\varepsilon >0$ denote by $F^{\varepsilon }$ the
distribution function of the random variable $X+\varepsilon N$ where $N$ is
a standard normal, independent of $X.$ Clearly, $X+\varepsilon N\rightarrow
X $ a.s. as $\varepsilon \rightarrow 0$ and so the $F^{\varepsilon }$%
converge pointwise. By the lemma below this implies uniform convergence of $%
F^{\varepsilon }$ to $F$. It remains to show that $\xi _{t}^{\varepsilon
}:=F^{\varepsilon }\left( t\right) $ is $C^{1}$ but this follows from%
\begin{equation*}
F^{\varepsilon }\left( t\right) =\int_{0}^{t}F\left( t-u\right)
dF_{\varepsilon N}\left( u\right)
\end{equation*}%
where $F_{\varepsilon N}$ is the distribution function of $\varepsilon N$.
\end{proof}

\begin{lemma}
Let $\left( f^{\eta }\right) _{\eta >0}\subset $ $C_{0}\left( \left[ 0,T%
\right] ,\mathbb{R}\right) $ ,$f^{\eta }\left( 1\right) =1$,each $f^{\eta }$
increasing (not necessarily strictly) and assume $f^{\eta }\rightarrow f\in
C_{0}\left( \left[ 0,T\right] ,\mathbb{R}\right) $ pointwise as $\eta
\rightarrow 0$. Then, $\left\vert f^{\eta }-f\right\vert _{\infty ;\left[ 0,T%
\right] }\rightarrow 0$ as $\eta \rightarrow 0.$
\end{lemma}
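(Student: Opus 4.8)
The plan is to recognize this as a version of Dini's theorem adapted to the fact that the \emph{limit} function $f$ is continuous and each $f^{\eta}$ is monotone, rather than the sequence being monotone in $\eta$. First I would reduce to a compact interval: everything lives on $[0,T]$, which is compact, so uniform continuity of $f$ is available. Fix $\varepsilon>0$. By uniform continuity of $f$ on $[0,T]$, choose a finite partition $0=s_{0}<s_{1}<\cdots<s_{N}=T$ fine enough that $f(s_{j})-f(s_{j-1})<\varepsilon$ for every $j$ (here I use that $f$, being a pointwise limit of increasing functions, is itself increasing, so its oscillation on $[s_{j-1},s_{j}]$ equals $f(s_{j})-f(s_{j-1})$). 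Since the partition is finite and $f^{\eta}\to f$ pointwise at each of the finitely many nodes $s_{0},\dots,s_{N}$, there is $\eta_{0}>0$ such that $|f^{\eta}(s_{j})-f(s_{j})|<\varepsilon$ for all $j$ and all $\eta<\eta_{0}$.

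Next I would propagate the nodewise bound to the whole interval using monotonicity of $f^{\eta}$. For arbitrary $t\in[0,T]$, pick $j$ with $s_{j-1}\le t\le s_{j}$. Then, because $f^{\eta}$ is increasing,
\begin{equation*}
f^{\eta}(s_{j-1})\le f^{\eta}(t)\le f^{\eta}(s_{j}),
\end{equation*}
and combining this with $f(s_{j-1})\le f(t)\le f(s_{j})$ and the nodewise estimates gives
\begin{equation*}
f^{\eta}(t)-f(t)\le f^{\eta}(s_{j})-f(s_{j-1})< \bigl(f(s_{j})+\varepsilon\bigr)-f(s_{j-1})< 2\varepsilon,
\end{equation*}
and symmetrically $f^{\eta}(t)-f(t)\ge f^{\eta}(s_{j-1})-f(s_{j})>-2\varepsilon$. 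Hence $|f^{\eta}(t)-f(t)|<2\varepsilon$ for all $t$ and all $\eta<\eta_{0}$, i.e.\ $|f^{\eta}-f|_{\infty;[0,T]}\to 0$.

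The only genuinely delicate point is the very first reduction: one must justify that the pointwise limit $f$ of increasing functions is increasing (immediate) and hence of bounded variation and uniformly continuous on $[0,T]$ (the latter is given by hypothesis $f\in C_{0}([0,T],\mathbb{R})$), so that the partition with small $f$-increments exists. Everything after that is the standard three-$\varepsilon$ sandwiching argument and requires no real work. Note that the normalization $f^{\eta}(1)=1$ (presumably $f^{\eta}(T)=T$ in the application) plays no role in the argument beyond ensuring the functions are finite-valued; the proof only uses monotonicity of each $f^{\eta}$, continuity of $f$, compactness of $[0,T]$, and pointwise convergence.
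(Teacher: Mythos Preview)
Your proof is correct and follows essentially the same route as the paper: choose a partition of $[0,T]$ on which the continuous limit $f$ has small increments, use pointwise convergence at the finitely many nodes, and then sandwich $f^{\eta}(t)$ between nodal values via monotonicity. The only cosmetic differences are that the paper uses the equispaced partition $\{i/n\}$ and splits the $\varepsilon$ in half to land on $\varepsilon$ rather than $2\varepsilon$.
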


\begin{proof}
Given $\varepsilon >0$ we can choose a $n\in \mathbb{N}$ big enough s.t. 
\begin{equation*}
\left\vert f\left( \frac{i}{n}\right) -f\left( \frac{i-1}{n}\right)
\right\vert <\frac{\varepsilon }{2}
\end{equation*}%
for every $i\in \left\{ 0,1,\ldots ,n\right\} .$ Now choose $\eta $ small
enough such that 
\begin{equation*}
\left\vert f^{\eta }\left( \frac{i}{n}\right) -f\left( \frac{i}{n}\right)
\right\vert <\frac{\varepsilon }{2}
\end{equation*}%
for all $i\in \left\{ 0,1,\ldots ,n\right\} .$ This implies $\left\vert
f^{\eta }\left( x\right) -f\left( x\right) \right\vert <\varepsilon $ since
every $x$ is an element of (at least one) interval $\left[ \frac{i-1}{n},%
\frac{i}{n}\right] $ and by monotonicity and using above estimates 
\begin{equation*}
f^{\eta }\left( x\right) \leq f^{\eta }\left( \frac{i}{n}\right) \leq
f\left( \frac{i-1}{n}\right) +\frac{\varepsilon }{2}+\frac{\varepsilon }{2}%
\leq f\left( x\right) +\varepsilon \text{.}
\end{equation*}%
Similarly 
\begin{equation*}
f^{\eta }\left( x\right) >f\left( x\right) +\varepsilon
\end{equation*}%
and so $\left\vert f^{\eta }\left( x\right) -f\left( x\right) \right\vert
<\varepsilon $ for all $x\in \left[ 0,T\right] $.
\end{proof}

\begin{remark}
Concerning the choice of notation $C_{0}^{1-\text{var},+},$ note that the
space of paths of finite $1$-variation $C_{0}^{1-var}$ is given as the
closure of $C^{1}$-paths with uniformly bounded $1$-variation. Since paths
in $C_{0}^{1,+}$ have $1$-variation bounded by $T,$ the notation $C_{0}^{1-%
\text{var},+}\left( \left[ 0,T\right] ,\mathbb{R}\right) $ seems natural.
\end{remark}

\begin{remark}
The paths $a\left( \Delta ,\cdot \right) $ and $b\left( \Delta ,\cdot
\right) $ converge in $\left( 1+\varepsilon \right) $-variation to the path $%
id_{1}:t\longmapsto t$ for every $\varepsilon >0$ and therefore also
uniformly (but not in $1$-variation!).
\end{remark}

\begin{remark}
$C_{0}^{1-\text{var},+}\left( \left[ 0,T\right] ,\mathbb{R}\right) $ is not
a linear space but a convex subset of $C_{0}^{1-\text{var}}\left( \left[ 0,T%
\right] ,\mathbb{R}\right) $.
\end{remark}

\begin{remark}
Despite the restriction of $C_{0}^{1-\text{var},+}\left( \left[ 0,T\right] ,%
\mathbb{R}\right) $ to paths with $\xi \left( T\right) =T$ which is
convenient in the proofs, one can handle PDEs with general increasing
processes by rescaling; e.g.\ replace $\xi $ by $\tilde{\xi}\left( t\right)
:=\xi \left( t\right) \frac{T}{\xi \left( T\right) }\in C_{0}^{1-\text{var}%
,+}\left( \left[ 0,T\right] ,\mathbb{R}\right) $ and write $du=F\left(
t,x,u,Du,D^{2}u\right) d\xi =\tilde{F}\left( t,x,u,Du,D^{2}u\right) d\tilde{%
\xi}$ with $\tilde{F}:=F\frac{\xi \left( T\right) }{T}$.
\end{remark}

\subsection{PDEs with discontinuous time-dependence\label{SecPDEdiscontTime}}

This section extends the notion of viscosity solutions to equations of the
form%
\begin{equation*}
du=F\left( t,x,u,Du,D^{2}u\right) d\xi \left( t\right) ,\text{ }u\left(
0,x\right) =u_{0}\left( x\right) ,
\end{equation*}%
with $F$ a continuous function and $\xi \in C_{0}^{1-\text{var,}+}\left( %
\left[ 0,T\right] ,\mathbb{R}\right) $. In the appendix we show that this
solution concept coincides with the notion of generalized viscosity
solutions (going back to \cite{Ishii85:HJBDiscontHamiltonian}) whenever the
latter exists. In view of applications in sections section \ref%
{SectionMainResult} and \ref{Section_SPDE_Apps} and to keep technicalities
down, we focus on time-independent $F$. A proof for time-dependent $F$ is
given in the appendix.

\begin{proposition}
\label{PropL1viscosity}Let $\left( \xi ^{\varepsilon }\right) _{\varepsilon
}\subset C_{0}^{1,+}\left( \left[ 0,T\right] ,\mathbb{R}\right) $ converge
uniformly to some $\xi \in C_{0}^{1-\text{var;}+}\left( \left[ 0,T\right] ,%
\mathbb{R}\right) $ as $\varepsilon \rightarrow 0$. Assume $\left(
v^{\varepsilon }\right) _{\varepsilon }\subset BUC\left( \left[ 0,T\right]
\times \mathbb{R}^{n},\mathbb{R}\right) $ are locally uniformly bounded
viscosity solutions of%
\begin{equation}
\partial _{t}v^{\varepsilon }=F^{\varepsilon }\left( x,v^{\varepsilon
},Dv^{\varepsilon },D^{2}v^{\varepsilon }\right) \dot{\xi}_{t}^{\varepsilon
},\text{ }v^{\varepsilon }\left( 0,x\right) =v_{0}\left( x\right) .
\label{EqPDEZetaEpsilon}
\end{equation}%
with $F^{\varepsilon }:\mathbb{R}^{e}\times \mathbb{R}\times \mathbb{R}%
^{e}\times \mathbb{S}^{e}\mathbb{\rightarrow R}$, $\mathbb{S}^{e}$ denoting
the space of symmetric $\left( e\times e\right) $-matrices, a continuous and
degenerate elliptic function. Further, assume that $F^{\varepsilon }$
converges locally uniformly to a continuous, degenerate elliptic function $F$
and that a comparison result holds for $\partial _{t}-F^{\varepsilon }=0$
and $\partial _{t}-F=0$. Then there exists a$\ v$ such that%
\begin{equation*}
v^{\varepsilon }\rightarrow v\text{ locally uniformly as }\varepsilon
\rightarrow 0.
\end{equation*}%
Further, $v$ does not depend on the choice of the sequence approximating $%
\xi $ and we also write $v\equiv v^{\xi }$ to emphasize the dependence on $%
\xi $ and say that $v$ solves 
\begin{equation*}
dv=F\left( x,v,Dv,D^{2}v\right) d\xi _{t},\text{ }v\left( 0,x\right)
=v_{0}\left( x\right) .
\end{equation*}
\end{proposition}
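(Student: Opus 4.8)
The plan is to strip off the time--inhomogeneity $\dot{\xi}^{\varepsilon}$ by a smooth time change, to apply the Barles--Perthame method of semi-relaxed limits to the resulting \emph{autonomous} equation, and then to transport the limit back through the time change. Since $\xi^{\varepsilon}\in C_{0}^{1,+}\left(\left[0,T\right],\mathbb{R}\right)$, the map $t\mapsto\xi^{\varepsilon}\left(t\right)$ is a $C^{1}$--diffeomorphism of $\left[0,T\right]$ onto itself (recall $\xi^{\varepsilon}\left(0\right)=0$, $\xi^{\varepsilon}\left(T\right)=T$, $\dot{\xi}^{\varepsilon}>0$), with $C^{1}$--inverse $\left(\xi^{\varepsilon}\right)^{-1}$. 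Set $w^{\varepsilon}\left(\tau,x\right):=v^{\varepsilon}\left(\left(\xi^{\varepsilon}\right)^{-1}\left(\tau\right),x\right)$. If $\varphi$ is a smooth test function touching $v^{\varepsilon}$ from above at $\left(t_{0},x_{0}\right)$, then $\psi\left(\tau,x\right):=\varphi\left(\left(\xi^{\varepsilon}\right)^{-1}\left(\tau\right),x\right)$ touches $w^{\varepsilon}$ from above at $\left(\xi^{\varepsilon}\left(t_{0}\right),x_{0}\right)$ and there satisfies $\partial_{\tau}\psi=\partial_{t}\varphi/\dot{\xi}^{\varepsilon}$, $D\psi=D\varphi$, $D^{2}\psi=D^{2}\varphi$; dividing the subsolution inequality for $v^{\varepsilon}$ by $\dot{\xi}^{\varepsilon}\left(t_{0}\right)>0$ (and symmetrically for supersolutions) shows that $w^{\varepsilon}$ is a viscosity solution, locally uniformly bounded in $\varepsilon$, of the autonomous problem
\begin{equation*}
\partial_{\tau}w^{\varepsilon}=F^{\varepsilon}\left(x,w^{\varepsilon},Dw^{\varepsilon},D^{2}w^{\varepsilon}\right),\quad w^{\varepsilon}\left(0,x\right)=v_{0}\left(x\right).
\end{equation*}

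Next I would run the standard stability argument on the family $\left(w^{\varepsilon}\right)$. Let $\overline{w}$ and $\underline{w}$ be its upper and lower semi-relaxed limits; they are finite by the local uniform bound. Stability of viscosity sub-/supersolutions under locally uniform convergence of the nonlinearity (here $F^{\varepsilon}\rightarrow F$, all degenerate elliptic) shows that $\overline{w}$ is a subsolution and $\underline{w}$ a supersolution of $\partial_{\tau}-F=0$; since every $w^{\varepsilon}$ carries the same datum, one gets $\overline{w}\left(0,\cdot\right)\le v_{0}\le\underline{w}\left(0,\cdot\right)$, so the assumed comparison principle for $\partial_{\tau}-F=0$ forces $\overline{w}\le\underline{w}$. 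As $\underline{w}\le\overline{w}$ always holds, $w:=\overline{w}=\underline{w}$ is continuous, is the unique viscosity solution of $\partial_{\tau}w=F\left(x,w,Dw,D^{2}w\right)$ with $w\left(0,\cdot\right)=v_{0}$, and equality of the two semi-relaxed limits is precisely locally uniform convergence $w^{\varepsilon}\rightarrow w$.

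It remains to undo the time change. Since $\xi^{\varepsilon}\rightarrow\xi$ uniformly and $\xi\in C_{0}^{1-\text{var},+}\left(\left[0,T\right],\mathbb{R}\right)$ is continuous, nondecreasing, with $\xi\left(T\right)=T$, one has $\xi\left(\left[0,T\right]\right)\subset\left[0,T\right]$. Put $v\left(t,x\right):=w\left(\xi\left(t\right),x\right)$; writing $v^{\varepsilon}\left(t,x\right)=w^{\varepsilon}\left(\xi^{\varepsilon}\left(t\right),x\right)$ and splitting $v^{\varepsilon}-v$ into $w^{\varepsilon}\left(\xi^{\varepsilon}\left(t\right),x\right)-w\left(\xi^{\varepsilon}\left(t\right),x\right)$ plus $w\left(\xi^{\varepsilon}\left(t\right),x\right)-w\left(\xi\left(t\right),x\right)$, the first term tends to $0$ by locally uniform convergence $w^{\varepsilon}\rightarrow w$ and the second by uniform continuity of $w$ on compacts, so $v^{\varepsilon}\rightarrow v$ locally uniformly, and $v$ is bounded and continuous. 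Independence of the approximating sequence is then immediate: any other $\left(\tilde{\xi}^{\varepsilon}\right)\subset C_{0}^{1,+}$ with $\tilde{\xi}^{\varepsilon}\rightarrow\xi$ produces, after the same time change, viscosity solutions of the \emph{same} autonomous equation, hence by the uniqueness established above the same limit $w$, and therefore $\tilde{v}^{\varepsilon}\rightarrow w\left(\xi\left(\cdot\right),\cdot\right)=v$.

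The change of variables and the by now classical semi-relaxed-limits machinery \cite{MR1118699UserGuide,MR2179357FS} are routine; the genuinely delicate point is the behaviour at the initial time, i.e.\ obtaining a modulus of continuity for the $w^{\varepsilon}$ at $\tau=0$ that is \emph{uniform} in $\varepsilon$, so that the upper relaxed limit does not overshoot the datum ($\overline{w}\left(0,\cdot\right)\le v_{0}$, and symmetrically $\underline{w}\left(0,\cdot\right)\ge v_{0}$) even for a possibly strongly degenerate $F$. Here one exploits that all approximants share the identical datum $v_{0}\in\mathrm{BUC}$: with a modulus of continuity of $v_{0}$ and standard barrier functions $v_{0}\left(x_{0}\right)\pm\omega\left(\left|x-x_{0}\right|\right)\pm C\tau$, which are super-/subsolutions of $\partial_{\tau}-F^{\varepsilon}=0$ for $C$ large (using local boundedness of $F^{\varepsilon}$, uniform in $\varepsilon$, and degenerate ellipticity), the assumed comparison for $\partial_{\tau}-F^{\varepsilon}=0$ pins down $w^{\varepsilon}$ near $\tau=0$ independently of $\varepsilon$. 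This is the one place where both comparison hypotheses (for the $F^{\varepsilon}$ and for $F$) are used in full.
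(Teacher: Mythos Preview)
Your proof is correct and follows essentially the same route as the paper: remove the factor $\dot{\xi}^{\varepsilon}$ by the $C^{1}$ time change $w^{\varepsilon}(\tau,x)=v^{\varepsilon}((\xi^{\varepsilon})^{-1}(\tau),x)$ (the paper isolates this step as a separate lemma), apply the Barles--Perthame semi-relaxed limits to the autonomous problem $\partial_{\tau}-F^{\varepsilon}=0$, use comparison for $\partial_{\tau}-F=0$ to collapse $\overline{w}=\underline{w}=:w$, and then set $v(t,x)=w(\xi(t),x)$. Your final paragraph on uniform-in-$\varepsilon$ barriers at $\tau=0$ makes explicit a point the paper leaves implicit; otherwise the arguments coincide.
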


Prepare the proof with

\begin{lemma}
\label{LemTimeChange}\bigskip Let $\xi \in C_{0}^{1,+}\left( \left[ 0,T%
\right] ,\mathbb{R}\right) $, 
\begin{equation*}
F:\left[ 0,T\right] \times \mathbb{R}^{n}\times \mathbb{R\times \mathbb{R}}%
^{n}\times \mathbb{S}^{n}\rightarrow \mathbb{R}
\end{equation*}%
and let $\tilde{F}\left( t,x,r,p,X\right) =F\left( \xi ^{-1}\left( t\right)
,r,x,p,X\right) $ for $\left( t,x,r,p,X\right) \in \left[ 0,T\right] \times 
\mathbb{R}^{n}\times \mathbb{R\times \mathbb{R}}^{n}\times \mathbb{S}^{n}.$
Then

\begin{enumerate}
\item if $u\in BUC\left( \left[ 0,T\right] \times \mathbb{R}^{n}\right) $ is
a sub- (resp. super) solution of $\partial _{t}-F\dot{\xi}=0,$ $u\left(
0,.\right) =u_{0}\left( .\right) $ then $w\left( t,x\right) :=u\left( \xi
_{t}^{-1},x\right) $ is a sub- (resp.\ super) solution of $\partial _{t}-%
\tilde{F}=0,$ $w\left( 0,.\right) =u_{0}\left( .\right) $.

\item if $w\in BUC\left( \left[ 0,T\right] \times \mathbb{R}^{n}\right) $ is
a sub- (resp. super) solution of $\partial _{t}-\tilde{F}=0$, $w\left(
0,.\right) =w_{0}\left( .\right) $ then $u\left( t,x\right) :=w\left( \xi
_{t},x\right) $ is a sub- (resp. super) solution of $\partial _{t}-F\dot{\xi}%
,$ $u\left( 0,.\right) =w_{0}\left( .\right) $.
\end{enumerate}
\end{lemma}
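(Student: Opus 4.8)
The plan is to carry out a change of variables in time, using that $\xi \in C_{0}^{1,+}\left( \left[ 0,T\right] ,\mathbb{R}\right) $ makes $t\mapsto \xi _{t}$ an increasing $C^{1}$ bijection of $\left[ 0,T\right] $ onto itself (since $\xi _{0}=0$, $\xi _{T}=T$, $\dot{\xi}>0$) whose inverse $\xi ^{-1}$ is again $C^{1}$ with strictly positive derivative. The two maps $u\mapsto w$, $w\left( t,x\right) =u\left( \xi ^{-1}_{t},x\right) $, and $w\mapsto u$, $u\left( t,x\right) =w\left( \xi _{t},x\right) $, are therefore mutually inverse; each preserves membership in $BUC\left( \left[ 0,T\right] \times \mathbb{R}^{n}\right) $ (because $\left[ 0,T\right] $ is compact and $\xi ,\xi ^{-1}$ are uniformly continuous) and matches the initial slices, $w\left( 0,\cdot \right) =u\left( 0,\cdot \right) $, since $\xi _{0}=\xi ^{-1}_{0}=0$. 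So it suffices to run the test-function argument once and read it backwards for the converse.

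Take $u$ a viscosity subsolution of $\partial _{t}-F\dot{\xi}=0$ on $\left( 0,T\right] \times \mathbb{R}^{n}$ and let $\phi $ be a $C^{1,2}$ test function touching $w$ from above at some $\left( t_{0},x_{0}\right) $, $t_{0}\in \left( 0,T\right] $; put $s_{0}:=\xi ^{-1}_{t_{0}}\in \left( 0,T\right] $. Since $s\mapsto \xi _{s}$ is a homeomorphism, $\psi \left( s,x\right) :=\phi \left( \xi _{s},x\right) $ touches $u\left( s,x\right) =w\left( \xi _{s},x\right) $ from above at $\left( s_{0},x_{0}\right) $, and $\psi $ is again $C^{1,2}$ because $\xi \in C^{1}$. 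The chain rule gives, at $\left( s_{0},x_{0}\right) $,
\begin{equation*}
\partial _{s}\psi =\dot{\xi}_{s_{0}}\,\partial _{t}\phi \left( t_{0},x_{0}\right) ,\qquad D\psi =D\phi \left( t_{0},x_{0}\right) ,\qquad D^{2}\psi =D^{2}\phi \left( t_{0},x_{0}\right) ,
\end{equation*}
the spatial derivatives being unaffected by the time change.

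Feeding $\psi $ into the subsolution inequality for $u$ at $\left( s_{0},x_{0}\right) $ and using $u\left( s_{0},x_{0}\right) =w\left( t_{0},x_{0}\right) $ gives
\begin{equation*}
\dot{\xi}_{s_{0}}\,\partial _{t}\phi \left( t_{0},x_{0}\right) \leq F\left( s_{0},x_{0},w\left( t_{0},x_{0}\right) ,D\phi \left( t_{0},x_{0}\right) ,D^{2}\phi \left( t_{0},x_{0}\right) \right) \dot{\xi}_{s_{0}};
\end{equation*}
dividing by $\dot{\xi}_{s_{0}}>0$ and recalling $F\left( s_{0},\cdot \right) =F\left( \xi ^{-1}_{t_{0}},\cdot \right) =\tilde{F}\left( t_{0},\cdot \right) $ is precisely the statement that $w$ is a subsolution of $\partial _{t}-\tilde{F}=0$ at $\left( t_{0},x_{0}\right) $. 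The supersolution case is the same with inequalities reversed and test functions touching from below; and part 2 is the same computation read backwards: given a $C^{1,2}$ test function $\psi $ touching $u$ from above at $\left( s_{0},x_{0}\right) $, the function $\phi \left( t,x\right) :=\psi \left( \xi ^{-1}_{t},x\right) $ is $C^{1,2}$ (here one uses $\xi ^{-1}\in C^{1}$), touches $w$ from above at $\left( \xi _{s_{0}},x_{0}\right) $, and multiplying the subsolution inequality for $w$ through by $\dot{\xi}_{s_{0}}>0$ returns the subsolution inequality for $u$.

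I do not expect a genuine obstacle: this is the standard fact that viscosity sub/supersolutions are stable under $C^{1}$ reparametrisations of the time variable, and the only points needing (routine) care are that the transformed test function remains admissible — guaranteed by $\xi ,\xi ^{-1}\in C^{1}$ — and that $\dot{\xi}>0$, so multiplying or dividing by $\dot{\xi}$ never flips an inequality. The one slightly fiddly item is the parabolic boundary: the initial datum at $t=0$ is matched automatically since $\xi _{0}=0$, and the handling of test points at the terminal time $t=T$ should just follow whatever convention for viscosity sub/supersolutions is in force here (one-sided test functions at $t=T$, say), which is respected by the time change because $\xi _{T}=T$.
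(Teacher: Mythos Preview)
Your argument is correct and follows essentially the same route as the paper: compose the test function with the $C^{1}$ time change $\xi$ (resp.\ $\xi^{-1}$), use the chain rule to relate the time derivatives while the spatial derivatives are unaffected, and divide (resp.\ multiply) by $\dot{\xi}>0$ to pass between the two viscosity inequalities. Your presentation adds the remark that $u\leftrightarrow w$ are mutual inverses preserving $BUC$ and initial data, which the paper leaves implicit, but the core computation is identical.
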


\begin{proof}
1. Let $\varphi \in C^{1,2}\left( \left[ 0,T\right] \times \mathbb{R}%
^{n}\right) $ and assume $w\left( t,x\right) -\varphi \left( t,x\right) $
attains a local maximum at $\left( \hat{t},\hat{x}\right) \in \left[ 0,T%
\right] \times \mathbb{R}^{n}$. Then 
\begin{equation*}
w\left( \hat{t},\hat{x}\right) -\varphi \left( \hat{t},\hat{x}\right)
=u\left( \xi ^{-1}\left( \hat{t}\right) ,\hat{x}\right) -\varphi \left( \hat{%
t},\hat{x}\right) =u\left( \xi ^{-1}\left( \hat{t}\right) ,\hat{x}\right) -%
\tilde{\varphi}\left( \xi ^{-1}\left( \hat{t}\right) ,\hat{x}\right)
\end{equation*}%
where $\tilde{\varphi}\left( \hat{t},\hat{x}\right) :=\varphi \left( \xi _{%
\hat{t}},\hat{x}\right) .$ Using that $u$ is a subsolution gives%
\begin{eqnarray*}
\partial _{t}\hat{\varphi}\lvert _{\xi _{\hat{t}}^{-1},\hat{x}} &\leq
&F\left( \hat{x},\xi ^{-1}\left( \hat{t}\right) ,u\left( \xi ^{-1}\left( 
\hat{t}\right) ,\hat{x}\right) ,D\tilde{\varphi}\lvert _{\xi _{\hat{t}}^{-1},%
\hat{x}},D^{2}\tilde{\varphi}\lvert _{\xi _{\hat{t}}^{-1},\hat{x}}\right) 
\dot{\xi}_{\xi _{\hat{t}}^{-1}} \\
&=&F\left( \hat{x},\xi ^{-1}\left( \hat{t}\right) ,w\left( \hat{t},\hat{x}%
\right) ,D\varphi \lvert _{\hat{t},\hat{x}},D^{2}\varphi \lvert _{\hat{t},%
\hat{x}}\right) \dot{\xi}_{\xi _{\hat{t}}^{-1}} \\
&=&\tilde{F}\left( \hat{x},\hat{t},w\left( \hat{t},\hat{x}\right) ,D\varphi
\lvert _{\hat{t},\hat{x}},D^{2}\varphi \lvert _{\hat{t},\hat{x}}\right) \dot{%
\xi}_{\xi _{\hat{t}}^{-1}}
\end{eqnarray*}%
where we used that $D\tilde{\varphi}\lvert _{\xi _{\hat{t}}^{-1},\hat{x}%
}=D\varphi \lvert _{\hat{t},\hat{x}}$ and $D^{2}\tilde{\varphi}\lvert _{\xi
_{\hat{t}}^{-1},\hat{x}}=D^{2}\varphi \lvert _{\hat{t},\hat{x}}$. Since $%
\partial _{t}\hat{\varphi}\lvert _{\xi _{t}^{-1},x}=\partial _{t}\varphi
\lvert _{t,x}$ $\dot{\xi}\lvert _{\xi _{t}^{-1}}$ \ and $\dot{\xi}>0$ it
follows that 
\begin{equation*}
\partial _{t}\varphi \lvert _{\hat{t},x}\leq \tilde{F}\left( \hat{x},\hat{t}%
,w\left( \hat{t},\hat{x}\right) ,D\varphi \lvert _{\hat{t},\hat{x}%
},D^{2}\varphi \lvert _{\hat{t},\hat{x}}\right) \text{.}
\end{equation*}%
Now the same argument as above when $u$ is a supersolution.

2. Let $\varphi \in C^{1,2}\left( \left[ 0,T\right] \times \mathbb{R}%
^{n}\right) $ and assume $u\left( t,x\right) -\varphi \left( t,x\right) $
attains a local maximum at $\left( \hat{t},\hat{x}\right) \in \left[ 0,T%
\right] \times \mathbb{R}^{n}$. Then 
\begin{equation*}
u\left( \hat{t},\hat{x}\right) -\varphi \left( \hat{t},\hat{x}\right)
=w\left( \xi \left( \hat{t}\right) ,\hat{x}\right) -\varphi \left( \hat{t},%
\hat{x}\right) =w\left( \xi \left( \hat{t}\right) ,\hat{x}\right) -\tilde{%
\varphi}\left( \xi \left( \hat{t}\right) ,\hat{x}\right)
\end{equation*}%
where $\tilde{\varphi}\left( t,x\right) :=\varphi \left( \xi ^{-1}\left(
t\right) ,x\right) .$ Using that $w$ is a subsolution gives%
\begin{eqnarray*}
\partial _{t}\hat{\varphi}\lvert _{\xi _{\hat{t}},\hat{x}} &\leq &\tilde{F}%
\left( \hat{x},\xi \left( \hat{t}\right) ,w\left( \xi \left( \hat{t}\right) ,%
\hat{x}\right) ,D\tilde{\varphi}\lvert _{\xi _{\hat{t}},\hat{x}},D^{2}\tilde{%
\varphi}\lvert _{\xi \left( \hat{t}\right) ,\hat{x}}\right) \\
&=&F\left( \hat{x},\hat{t},u\left( \hat{t},\hat{x}\right) ,D\varphi \lvert _{%
\hat{t},\hat{x}},D^{2}\varphi \lvert _{\hat{t},\hat{x}}\right)
\end{eqnarray*}%
where we used that $D\tilde{\varphi}\lvert _{\xi _{\hat{t}}^{-1},\hat{x}%
}=D\varphi \lvert _{\hat{t},\hat{x}}$ and $D^{2}\tilde{\varphi}\lvert _{\xi
_{\hat{t}}^{-1},\hat{x}}=D^{2}\varphi \lvert _{\hat{t},\hat{x}}$. Since $%
\dot{\xi}>0$ and $\partial _{t}\hat{\varphi}\lvert _{\xi _{t},x}=\partial
_{t}\varphi \lvert _{t,x}$ $\left( \xi ^{-1}\right) ^{\prime }\lvert _{\xi
_{t}}=$ \ $\partial _{t}\varphi \lvert _{t,x}\left( \dot{\xi}\left( t\right)
\right) ^{-1}$ it follows that 
\begin{equation*}
\partial _{t}\varphi \lvert _{\hat{t},x}\leq F\left( \hat{x},\hat{t},u\left( 
\hat{t},\hat{x}\right) ,D\varphi \lvert _{\hat{t},\hat{x}},D^{2}\varphi
\lvert _{\hat{t},\hat{x}}\right) \dot{\xi}\left( \hat{t}\right) \text{.}
\end{equation*}%
Now the same argument as above when $w$ is a supersolution.
\end{proof}

\begin{proof}[Proof of Proposition \protect\ref{PropL1viscosity}]
Set $w^{\varepsilon }\left( t,x\right) :=v^{\varepsilon }\left( \left( \xi
^{\varepsilon }\right) ^{-1}\left( t\right) ,x\right) $, by lemma \ref%
{LemTimeChange}, 
\begin{equation*}
v^{\varepsilon }\text{ is a solution of }\partial _{t}-F^{\varepsilon }\dot{%
\xi}^{\varepsilon }=0\text{ iff }w^{\varepsilon }\text{ is a solution of }%
\partial _{t}-F^{\varepsilon }=0\text{.}
\end{equation*}%
Let 
\begin{equation*}
\overline{w}:=\lim \sup_{\varepsilon }\text{ }^{\ast }w^{\varepsilon }\text{
and }\underline{w}:=\lim \inf_{\varepsilon }\text{ }_{\ast }w^{\varepsilon }
\end{equation*}%
and note that 
\begin{equation*}
F^{\varepsilon }\left( x,r,p,X\right) \rightarrow F\left( x,r,p,X\right) 
\text{ locally uniformly.}
\end{equation*}%
Standard viscosity theory tells us that $\overline{w}$ and $\underline{w}$
are sub- resp. supersolutions of $\partial _{t}-F=0$. Using the method of
semi-relaxed limits (by definition, $\overline{w}\geq \underline{w}$ and the
reversed inequality follows from comparison which holds by assumption)
conclude that $w\left( t,x\right) :=\overline{w}\left( t,x\right) =%
\underline{w}\left( t,x\right) .$ Further, using a Dini-type argument, for
every compact set $K\subset \mathbb{R}^{n}$, 
\begin{equation*}
\left\vert w^{\varepsilon }-w\right\vert _{\infty ;\left[ 0,T\right] \times
K}=\sup_{t\in \left[ 0,T\right] ,\text{ }x\in K}\left\vert w^{\varepsilon
}\left( t,x\right) -w\left( t,x\right) \right\vert \rightarrow 0\text{ as }%
\varepsilon \rightarrow 0\text{.}
\end{equation*}%
Now define 
\begin{equation*}
v\left( t,x\right) :=w\left( \xi \left( t\right) ,x\right) .
\end{equation*}%
and we get the claimed convergence $v^{\varepsilon }\rightarrow v$.
\end{proof}

\section{Splitting RPDEs\label{SectionMainResult}}

\subsection{Rough viscosity solutions and stability}

Solutions of RDEs can be defined as limit points of ODEs. Similarly one can
define solutions of rough partial differential equations as limit points of
PDE solutions.

\begin{definition}
\label{Def_Stable}Let $\mathbf{z\in }C_{0}^{0,p\text{-var}}\left( \left[ 0,T%
\right] ,G^{\left[ p\right] }\left( \mathbb{R}^{d}\right) \right) ,$ $\xi
\in C_{0}^{1-\text{var;}+}\left( \left[ 0,T\right] ,\mathbb{R}\right) $ and
denote $\left( \xi ,\mathbf{z}\right) \in C_{0}^{0,p\text{-var}}\left( \left[
0,T\right] ,G^{\left[ p\right] }\left( \mathbb{R}^{d+1}\right) \right) $ the
Young pairing (given canoncially via Young integration). Further, let $%
\left( \xi ^{\varepsilon },z^{\varepsilon }\right) \subset $ $%
C_{0}^{1,+}\left( \left[ 0,T\right] ,\mathbb{R}\right) \times C_{0}^{1\text{%
-var}}\left( \left[ 0,T\right] ,\mathbb{R}^{d}\right) ,$ converge to $\left(
\xi ,\mathbf{z}\right) $ in the sense of $\left( \ref%
{Lemma_Joint_RP_convergence}\right) $ and assume the PDE%
\begin{eqnarray*}
\mathrm{d}u^{\varepsilon } &=&F\left( x,u^{\varepsilon },Du^{\varepsilon
},D^{2}u^{\varepsilon }\right) \mathrm{d}\xi ^{\varepsilon
}+\sum_{k=1}^{d}\Lambda _{k}\left( t,x,u^{\varepsilon },Du^{\varepsilon
}\right) \mathrm{d}z^{\varepsilon ;k}\text{ on }\left( 0,T\right] \times 
\mathbb{R}^{n},\text{ } \\
u\left( 0,x\right) &=&u_{0}\left( x\right) \in BUC\left( \mathbb{R}^{e},%
\mathbb{R}\right)
\end{eqnarray*}%
has a unique solution $u^{\varepsilon }\in BUC\left( \left[ 0,T\right]
\times \mathbb{R}^{e};\mathbb{R}\right) $ for every $\varepsilon >0$. We
call every limit point $u$ of $\left( u^{\varepsilon }\right) $ (in $BUC$
topology) a solution of the RPDE 
\begin{equation}
\mathrm{d}u=F\left( x,u,Du,D^{2}u\right) \mathrm{d}\xi +\Lambda \left(
t,x,u,Du\right) \mathrm{d}\mathbf{z}\text{ on }\left( 0,T\right] \times 
\mathbb{R}^{n},\text{ }u\left( 0,x\right) =u_{0}\left( x\right) .
\label{Eq_Stable_RPDE}
\end{equation}%
If additionally, the limit is unique, does not depend on the choice of the
approximating sequence $\left( \xi ^{\varepsilon },z^{\varepsilon }\right) $
and the map%
\begin{equation*}
\left( \xi ,\mathbf{z}\right) \in C_{0}^{1-\text{var,}+}\left( \left[ 0,T%
\right] ,\mathbb{R}\right) \times C_{0}^{0,p\text{-var}}\left( \left[ 0,T%
\right] ,G^{\left[ p\right] }\left( \mathbb{R}^{d}\right) \right) \mapsto
u\in BUC\left( \left[ 0,T\right] \times \mathbb{R}^{e},\mathbb{R}\right)
\end{equation*}%
is continuous then we say that the RPDE $\left( \ref{Eq_Stable_RPDE}\right) $
is stable in a rough path sense and we also write $u=u^{\xi ,\mathbf{z}}$
(or $u=u^{\mathbf{z}}$ when $\xi \left( t\right) =t$) to emphasize
dependence on the rough path $\left( \xi ,\mathbf{z}\right) $.
\end{definition}

\subsection{The main theorem}

We are now able to formulate our main theorem. The proof is an easy
consquence of the results in the previous sections. In section \ref%
{SectionMainResult} we show that the assumptions are satisfied for a large
class of RPDEs.

\begin{theorem}
\label{Thm_Splitting}Let $\mathbf{z\in }C_{0}^{0,p\text{-var}}\left( \left[
0,T\right] ,G^{\left[ p\right] }\left( \mathbb{R}^{d}\right) \right) ,\xi
\in C_{0}^{+,1-\text{var}}\left( \left[ 0,T\right] ,\mathbb{R}\right) $ and
assume $u\in BUC$ is the unique solution of the stable (in the sense of
definition \ref{Def_Stable}) RPDE, 
\begin{equation}
\mathrm{d}u=F\left( x,u,Du,D^{2}u\right) \mathrm{d}\xi +\Lambda \left(
t,x,u,Du\right) \mathrm{d}\mathbf{z}\text{ on }\left( 0,T\right] \times 
\mathbb{R}^{e},u\left( 0,x\right) =u_{0}\left( x\right) \in BUC.
\label{Eq_StableRPDE}
\end{equation}%
Assume further that also the two (R)PDEs given by setting either $F\equiv 0$
or $\Lambda \equiv 0$ in $\left( \ref{Eq_StableRPDE}\right) $ are stable.
Denote $\left\{ \mathbf{P}_{s,t},0\leq u\leq T\right\} $ and $\left\{ 
\mathbf{Q}_{s,t},0\leq s\leq t\leq T\right\} $ the solution operators%
\begin{eqnarray*}
\mathbf{P}_{s,t} &:&BUC\left( \mathbb{R}^{e},\mathbb{R}\right) \rightarrow
BUC\left( \left[ 0,T\right] \times \mathbb{R}^{e},\mathbb{R}\right) ,\text{ }%
\varphi \longmapsto v, \\
\mathbf{Q}_{s,t} &:&BUC\left( \mathbb{R}^{e},\mathbb{R}\right) \rightarrow
BUC\left( \left[ 0,T\right] \times \mathbb{R}^{e},\mathbb{R}\right) ,\text{ }%
\phi \longmapsto w,
\end{eqnarray*}%
with%
\begin{eqnarray*}
&&dv=F\left( x,v,Dv,D^{2}v\right) d\xi ,\text{ }v\left( 0,x\right) =\varphi
\left( x\right) , \\
&&dw=\Lambda \left( t,x,w,Dw\right) d\mathbf{z,}\text{ }w\left( s,x\right)
=\phi \left( x\right) .
\end{eqnarray*}%
and set%
\begin{equation*}
u^{n;\text{Split}}\left( t,x\right) :=\dprod\limits_{i=0}^{\left\lfloor
tn^{-1}\right\rfloor -1}\left[ \mathbf{Q}_{in^{-1},\left( i+1\right)
n^{-1}}\circ \mathbf{P}_{in^{-1},\left( i+1\right) n^{-1}}\right] \left(
u_{0}\left( x\right) \right) .
\end{equation*}%
Then%
\begin{equation*}
u^{n;\text{Split}}\rightarrow u\text{ locally uniformly as }n\rightarrow
\infty \text{.}
\end{equation*}
\end{theorem}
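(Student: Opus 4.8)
The plan is to reduce the statement to the convergence $(\xi^{n},\mathbf{z}^{n})\to(\xi,\mathbf{z})$ together with the three stability hypotheses, running the ``stretching of the time-scale'' argument of the RDE example of Section~2.2 almost verbatim, with Proposition~\ref{PropL1viscosity} playing the role that Lyons' limit theorem played there for the $F$-part. Concretely: fix $n$, write $t_{i}:=in^{-1}$, and set $\xi^{n}:=\xi\circ a(n^{-1},\cdot)$ and $\mathbf{z}^{n}:=\mathbf{z}\circ b(n^{-1},\cdot)$. Since total ($1$- resp.\ $p$-) variation is invariant under reparametrisation, $(\xi^{n},\mathbf{z}^{n})$ again lies in $C_{0}^{1\text{-var},+}\times C_{0}^{0,p\text{-var}}$ (cf.\ Proposition~\ref{PropApproxTime}), and by Lemma~\ref{Lemma_Joint_RP_convergence} it converges to $(\xi,\mathbf{z})$ in the sense of that lemma. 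Let $u^{n}:=u^{\xi^{n},\mathbf{z}^{n}}$ be the corresponding (unique) solution of \eqref{Eq_StableRPDE}; the stability hypothesis, i.e.\ continuity of $(\xi,\mathbf{z})\mapsto u^{\xi,\mathbf{z}}$, then gives
\[
u^{n}\longrightarrow u\qquad\text{locally uniformly as }n\to\infty .
\]

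The substantive step is to identify $u^{n}$, on the grid $D^{n}$, with the splitting scheme. By construction of $a$ and $b$, on the first half $[t_{i},t_{i}+\tfrac{1}{2n}]$ of each grid interval the path $\mathbf{z}^{n}$ is constant while $\xi^{n}$ carries the entire $\xi$-increment over $[t_{i},t_{i+1}]$, and on the second half the roles are reversed; a frozen driver contributes nothing to the equation, so $u^{n}$ solves $\mathrm{d}v=F(x,v,Dv,D^{2}v)\,\mathrm{d}\xi^{n}$ on the first half (with datum $u^{n}(t_{i},\cdot)$) and $\mathrm{d}w=\Lambda(t,x,w,Dw)\,\mathrm{d}\mathbf{z}^{n}$ on the second half. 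Because $F$ is time-independent, Proposition~\ref{PropL1viscosity} (through Lemma~\ref{LemTimeChange}) shows that the first step depends on $\xi^{n}|_{[t_{i},t_{i+1}]}$ only through its reparametrisation class, hence coincides with $\mathbf{P}_{t_{i},t_{i+1}}$. For the second step I would mimic Section~2.2: approximate $\mathbf{z}$ by concatenations of geodesics on the dissections $D^{m}$ (for which the $\mathbf{z}$-step is an ordinary PDE, hence reparametrisation-invariant, so it agrees with the ``$D^{m}$-grid'' $\mathbf{Q}$), then let $m\to\infty$ using Lemma~\ref{Lemma_Joint_RP_convergence} and the stability of the $F\equiv0$ equation; this yields that the second step equals $\mathbf{Q}_{t_{i},t_{i+1}}$. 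Iterating over $i$,
\[
u^{n}(t_{i},\cdot)=\Bigl[\,\textstyle\prod_{j=0}^{i-1}\mathbf{Q}_{t_{j},t_{j+1}}\circ\mathbf{P}_{t_{j},t_{j+1}}\Bigr]\bigl(u_{0}\bigr)=u^{n;\text{Split}}(t_{i},\cdot),\qquad t_{i}\in D^{n}.
\]
Since $u^{n;\text{Split}}(t,\cdot)$ is this product taken up to the largest grid point $\le t$, which converges to $t$, and since $u^{n}\to u$ uniformly on compacts with $u$ continuous (so the $u^{n}$ are asymptotically equicontinuous in $t$), one concludes $u^{n;\text{Split}}\to u$ locally uniformly.

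The main obstacle is this grid identification. First, the decoupling of the stretched equation on the two half-intervals must be made rigorous in the viscosity sense across the time-discontinuity that $\xi^{n}$ (and the reparametrised $\mathbf{z}^{n}$) introduces at the interval midpoints — this is exactly what Section~\ref{SecPDEdiscontTime} and Proposition~\ref{PropL1viscosity} are built for. Second, matching the $\mathbf{z}$-step with $\mathbf{Q}_{t_{i},t_{i+1}}$ requires that the physical time entering $\Lambda$ be reparametrised in tandem with $\mathbf{z}$ (it is automatic when $\Lambda$ is time-independent, and is otherwise to be built into the definition of the approximating equations). Everything else is bookkeeping with the time changes $a,b$ exactly as in the ODE and RDE examples.
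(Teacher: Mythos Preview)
Your proposal is correct and follows the same strategy as the paper's proof: define $\xi^{n}=\xi\circ a(n^{-1},\cdot)$, $\mathbf{z}^{n}=\mathbf{z}\circ b(n^{-1},\cdot)$, invoke Lemma~\ref{Lemma_Joint_RP_convergence} for convergence of the drivers, use stability of \eqref{Eq_StableRPDE} to get $u^{n}\to u$, and then identify $u^{n}$ with $u^{n;\text{Split}}$ on the grid $D^{n}$. In fact you are considerably more explicit than the paper, which dispatches the grid identification in a single sentence; your breakdown into the $F$-half (handled via Proposition~\ref{PropL1viscosity}/Lemma~\ref{LemTimeChange}) and the $\Lambda$-half (handled by geodesic approximation plus stability of the $F\equiv 0$ equation, as in Section~2.2) is exactly the content the paper leaves implicit.
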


\begin{proof}
Define $\mathbf{z}^{n}=\mathbf{z}\left( b\left( n^{-1},t\right) \right) $, $%
\xi ^{n}\left( t\right) =\xi \left( a\left( n^{-1},t\right) \right) $. By
lemma \ref{Lemma_Joint_RP_convergence}, $\left( \xi ^{n},\mathbf{z}%
^{n}\right) \rightarrow _{n}\left( \xi ,\mathbf{z}\right) $ in the sense of $%
\left( \ref{Lemma_Joint_RP_convergence}\right) $ and by stability, the
solutions $u^{n}$ of 
\begin{equation*}
\mathrm{d}u^{n}=F\left( x,u^{n},Du^{n},D^{2}u^{n}\right) \mathrm{d}\xi
^{n}+\Lambda \left( t,x,u^{n},Du^{n}\right) \mathrm{d}z^{n}\text{ on }\left(
0,T\right] \times \mathbb{R}^{n},u\left( 0,x\right) =u_{0}\left( x\right)
\end{equation*}%
converge to $u,$ the solution of $\left( \ref{Eq_Stable_RPDE}\right) $.\ Now
for each given $n$ one can identify on points of the dissection $\left\{ k%
\frac{T}{n},i=0,\ldots ,n\right\} $ the solutions of $u^{n;\text{Split}}$
with $u^{n}$ and by assumptions $u^{n}$ converges locally uniformly to $u.$
\end{proof}

\subsection{Examples of stable RPDEs}

This section shows stability in a rough path sense for a large class of
RPDEs. Splitting results then follow readily by theorem \ref{Thm_Splitting}.
Throughout this section $\mathbf{z}$ is a geometric $p-$rough path, i.e. $%
\mathbf{z\in }C_{0}^{0,p-\text{var}}\left( \left[ 0,T\right] ,G^{p}\left( 
\mathbb{R}^{d}\right) \right) ,$ $p\geq 1$.

\begin{proposition}
\label{Prop_Linear_is_Stable}Let 
\begin{eqnarray*}
L\left( x,r,p,X\right) &=&\mathrm{Tr}\left[ A\left( x\right) ^{T}X\right]
+b\left( x\right) \cdot p+f\left( x,r\right) \\
\Lambda _{k}\left( t,x,r,p\right) &=&\left( p\cdot \sigma _{k}\left(
t,x\right) \right) +r\,\nu _{k}\left( t,x\right) +g_{k}\left( t,x\right)
\end{eqnarray*}%
with $A\left( x\right) =\bar{\sigma}\left( x\right) \bar{\sigma}^{T}\left(
x\right) \in \mathbb{S}^{e}$ and $\bar{\sigma}:\mathbb{R}^{e}\rightarrow 
\mathbb{R}^{e\times e^{\prime }}$, $b\left( x\right) :\mathbb{R}%
^{e}\rightarrow \mathbb{R}^{e}$ bounded, Lipschitz continuous in $x$. Also
assume that $f:\mathbb{R}^{e}\times \mathbb{R}\rightarrow \mathbb{R}$ is
continuous, bounded whenever $r$\ remains bounded, and with a lower
Lipschitz bound, i.e.\ $\exists C<0$ s.t.%
\begin{equation*}
f\left( x,r\right) -f\left( x,s\right) \geq C\left( r-s\right) \text{ for
all }r\geq s,\text{ }x\in \mathbb{R}^{e}\text{.}
\end{equation*}%
and that the coefficients of $\Lambda =\left( \Lambda _{1},\dots ,\Lambda
_{d}\right) ,$ thats is $\sigma ,\nu $ and $g,$ have $Lip^{\gamma }$%
-regularity for $\gamma >p+2.$ Then the RPDE%
\begin{equation}
du=L\left( x,u,Du,D^{2}u\right) dt+\Lambda \left( t,x,u,Du\right) d\mathbf{%
z,\,\,\,}u\left( 0,\cdot \right) \equiv u_{0}  \label{Eq_LinRPDE}
\end{equation}%
is stable in a rough path sense and has a unique solution $u^{\mathbf{z}}\in
BUC\left( \left[ 0,T\right] \times \mathbb{R}^{e},\mathbb{R}\right) $.
\end{proposition}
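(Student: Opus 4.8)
The plan is to eliminate the rough driver $\mathbf{z}$ by the method of characteristics, reducing the smooth $\varepsilon $-approximation of $(\ref{Eq_LinRPDE})$ from Definition \ref{Def_Stable} to a genuinely degenerate-parabolic equation carrying only the time change, to which Proposition \ref{PropL1viscosity} (and, for the $\varepsilon $-problems, Lemma \ref{LemTimeChange} together with Perron's method) applies; stability then follows by feeding the continuity of the It\^{o}--Lyons map into that proposition. \emph{Removing the rough noise:} fix $\left( \xi ^{\varepsilon },z^{\varepsilon }\right) \subset C_{0}^{1,+}\times C_{0}^{1\text{-var}}$ converging to $\left( \xi ,\mathbf{z}\right) $ in the sense of Lemma \ref{Lemma_Joint_RP_convergence}. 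Since each $\Lambda _{k}\left( t,x,r,p\right) =p\cdot \sigma _{k}\left( t,x\right) +r\,\nu _{k}\left( t,x\right) +g_{k}\left( t,x\right) $ is affine in $\left( r,p\right) $, the equation $\mathrm{d}w=\sum_{k}\Lambda _{k}\left( t,x,w,Dw\right) \mathrm{d}z^{\varepsilon ;k}$ is a linear transport equation, and the substitution $w\left( t,x\right) =M_{t}^{\varepsilon }\left( x\right) \,v\bigl( t,\left( \phi _{t}^{\varepsilon }\right) ^{-1}x\bigr) +A_{t}^{\varepsilon }\left( x\right) $ --- with $\phi ^{\varepsilon }$ the flow of the time-dependent ODE driven by $z^{\varepsilon }$ with vector fields $\sigma =\left( \sigma _{k}\right) $, and $M^{\varepsilon },A^{\varepsilon }$ the multiplicative and additive corrections produced by $\nu _{k},g_{k}$ --- turns the $\varepsilon $-approximating PDE of Definition \ref{Def_Stable} into
\begin{equation*}
\partial _{t}v^{\varepsilon }=\tilde{L}^{\varepsilon }\bigl( t,x,v^{\varepsilon },Dv^{\varepsilon },D^{2}v^{\varepsilon }\bigr) \,\dot{\xi}_{t}^{\varepsilon },\qquad v^{\varepsilon }\left( 0,\cdot \right) =u_{0},
\end{equation*}
with $\tilde{L}^{\varepsilon }$ the conjugate of $L$ by the diffeomorphism $\phi _{t}^{\varepsilon }$ and the factor $M_{t}^{\varepsilon }$; this correspondence is a bijection on viscosity sub/supersolutions because $\phi ^{\varepsilon }$ is $C^{2}$ in space, so test functions stay in $C^{1,2}$.

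\emph{Structure and passage to the limit.} Because $\sigma \in Lip^{\gamma }$ with $\gamma >p+2$, the flows $\phi ^{\varepsilon }$ and $\left( \phi ^{\varepsilon }\right) ^{-1}$ are $C^{2}$ in space with Lipschitz second derivatives and of finite $1$-variation in time, all bounded uniformly in $\varepsilon $; hence $\tilde{L}^{\varepsilon }$ is again continuous and degenerate elliptic (conjugating the second-order part by the Jacobian of a diffeomorphism preserves monotonicity in the matrix slot), with second-order coefficient of the form $\tilde{\sigma}^{\varepsilon }\left( \tilde{\sigma}^{\varepsilon }\right) ^{T}$, bounded Lipschitz first-order coefficient, and a zeroth-order term inheriting the lower Lipschitz bound of $f$ up to a bounded Lipschitz perturbation. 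By continuity of the It\^{o}--Lyons map (Lyons' limit theorem in its $C^{2}$-flow form), $\phi ^{\varepsilon }\rightarrow \phi ^{\mathbf{z}}$ together with its first two space derivatives, uniformly on compacts, and likewise $M^{\varepsilon }\rightarrow M^{\mathbf{z}}$, $A^{\varepsilon }\rightarrow A^{\mathbf{z}}$; consequently $\tilde{L}^{\varepsilon }\rightarrow \tilde{L}:=\tilde{L}^{\mathbf{z}}$ locally uniformly, with $\tilde{L}$ of the same structure. A barrier argument using boundedness of $u_{0}$, of $f$ for bounded $r$, and of the $g_{k}$ shows that the $v^{\varepsilon }$, and hence also the $u^{\varepsilon }$, are locally uniformly bounded.

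\emph{Comparison and assembly.} After the substitution $v\mapsto e^{-\lambda t}v$ with $\lambda $ large enough to absorb the lower Lipschitz bound and the bounded zeroth-order perturbation, $\partial _{t}-\tilde{L}^{\varepsilon }$ and $\partial _{t}-\tilde{L}$ become proper and, thanks to the $\tilde{\sigma}\tilde{\sigma}^{T}$ structure with bounded Lipschitz data, satisfy the structure condition (3.14) of the User's Guide, so a comparison principle holds for each; combined with Perron's method and Lemma \ref{LemTimeChange} this gives existence and uniqueness of each $u^{\varepsilon }\in BUC$. Invoking Proposition \ref{PropL1viscosity} (in the time-dependent form of the appendix) with $F^{\varepsilon }=\tilde{L}^{\varepsilon }$, $F=\tilde{L}$ yields $v\in BUC$, independent of the approximating sequence, with $v^{\varepsilon }\rightarrow v$ locally uniformly and $\mathrm{d}v=\tilde{L}\left( t,x,v,Dv,D^{2}v\right) \mathrm{d}\xi $. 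Undoing the change of variables in the limit produces $u:=M^{\mathbf{z}}\,v\bigl( \cdot ,\left( \phi ^{\mathbf{z}}\right) ^{-1}\cdot \bigr) +A^{\mathbf{z}}\in BUC$ with $u^{\varepsilon }\rightarrow u$ locally uniformly; since each $u^{\varepsilon }$ is the unique $BUC$ solution of its PDE, $u=u^{\mathbf{z}}$ is the unique $BUC$ solution of $(\ref{Eq_LinRPDE})$ and the unique limit point in the sense of Definition \ref{Def_Stable}. Repeating the construction for two pairs $\left( \xi ,\mathbf{z}\right) $ and using continuity of each ingredient (rough path $\mapsto $ flow $\mapsto $ conjugate operator $\mapsto $ viscosity solution, the last step again by semi-relaxed limits) gives continuity of $\left( \xi ,\mathbf{z}\right) \mapsto u$, i.e.\ stability in a rough path sense.

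\emph{The main obstacle.} The delicate part is the interplay of the first two steps: one must check that conjugating $L$ by the characteristic flow produces an operator $\tilde{L}^{\varepsilon }$ which is not merely degenerate elliptic but retains exactly the structure --- second-order part $\bar{\sigma}\bar{\sigma}^{T}$ with bounded Lipschitz data, zeroth-order term with a lower Lipschitz bound --- needed for comparison to hold \emph{uniformly in $\varepsilon $}, and that this structure, together with the $C^{2}$-in-space regularity and uniform $1$-variation-in-time bounds of $\phi ^{\varepsilon }$, is stable under $z^{\varepsilon }\rightarrow \mathbf{z}$; this is precisely why $Lip^{\gamma }$ with $\gamma >p+2$ is imposed and not merely $\gamma >p$. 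The orthogonal difficulty --- that natural approximations such as $\xi ^{\varepsilon }=a\left( n^{-1},\cdot \right) $ lie only in $\overline{C_{0}^{1,+}}$ and force a time-discontinuity in $\dot{\xi}^{\varepsilon }$ --- is exactly what Proposition \ref{PropL1viscosity} was designed to absorb.
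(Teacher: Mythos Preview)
Your proposal is correct and follows essentially the same route as the paper: remove the $\Lambda\,d\mathbf{z}$ term by the flow/characteristics transformation (your $\phi^{\varepsilon},M^{\varepsilon},A^{\varepsilon}$ are the paper's $\psi^{\varepsilon},\phi^{\varepsilon},\alpha^{\varepsilon}$), reduce to $d\tilde v^{\varepsilon}=\tilde L^{\varepsilon}\,d\xi^{\varepsilon}$, invoke Proposition~\ref{PropL1viscosity} together with rough-path continuity of the flows to pass to the limit, then unwrap. You are in fact more explicit than the paper about the structural verifications (preservation of the $\tilde\sigma\tilde\sigma^{T}$ form, the lower Lipschitz bound on the zeroth-order term, the $e^{-\lambda t}$ reduction for comparison, and the barrier bounds), all of which the paper outsources to \cite{FrizOberhauser10:StabilityFiltering}; the only point to watch is that your appeal to the time-dependent appendix version carries the extra hypothesis $\sup_{\varepsilon}\lvert\dot\xi^{\varepsilon}\rvert_{\infty}<\infty$, whereas the paper sidesteps this by treating $\tilde L^{\varepsilon}$ as if time-independent and applying Proposition~\ref{PropL1viscosity} directly---both are slightly informal on this point, but the argument goes through since the only time-dependence of $\tilde L^{\varepsilon}$ enters via the flows, which converge uniformly after the time change $t\mapsto(\xi^{\varepsilon})^{-1}(t)$.
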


\begin{proof}
We use the same technique of \textquotedblleft rough semi-relaxed limits" as
in \cite{FrizOberhauser10:StabilityFiltering}; for details of the transform
of $u^{\varepsilon }$ to $\tilde{v}^{\varepsilon }$ we refer to \cite%
{FrizOberhauser10:StabilityFiltering}: the key remark being that $%
u^{\varepsilon }$ is a solution of%
\begin{equation}
du^{\varepsilon }=L\left( x,u^{\varepsilon },Du^{\varepsilon
},D^{2}u^{\varepsilon }\right) d\xi ^{\varepsilon }+\Lambda \left(
t,x,u^{\varepsilon },Du^{\varepsilon }\right) dz^{\varepsilon }\mathbf{%
,\,\,\,}u\left( 0,\cdot \right) \equiv u_{0}\left( \cdot \right)
\label{Eq_Linear_RPDE_Eps}
\end{equation}%
iff 
\begin{equation*}
\tilde{v}^{\varepsilon }\left( t,x\right) :=\left( \phi ^{\varepsilon
}\right) ^{-1}\left( t,u^{\varepsilon }\left( t,\psi ^{\varepsilon }\left(
t,x\right) \right) ,x\right) +\alpha ^{\varepsilon }\left( t,x\right) ;
\end{equation*}%
is a solution of 
\begin{equation*}
d\tilde{v}^{\varepsilon }=\tilde{L}^{\varepsilon }\left( x,\tilde{v}%
^{\varepsilon },D\tilde{v}^{\varepsilon },D^{2}\tilde{v}^{\varepsilon
}\right) d\xi ^{\varepsilon }\mathbf{,\,\,\,}\tilde{v}^{\varepsilon }\left(
0,\cdot \right) \equiv u_{0}\left( \cdot \right) .
\end{equation*}%
Here $\tilde{L}^{\varepsilon }$ is a linear operator with coefficients
determined by the characteristics of the PDE $\partial w=\Lambda \left(
t,x,w^{\varepsilon },Dw^{\varepsilon }\right) dz^{\varepsilon }$ and $\psi
^{\varepsilon },\phi ^{\varepsilon },\alpha ^{\alpha }$ are ODE flows
converging to RDE flows $\psi ^{\mathbf{z}},\phi ^{\mathbf{z}},\alpha ^{%
\mathbf{z}}$ which depend on $\mathbf{z}$ (but not the approximating
sequence of $\left( z^{\varepsilon }\right) $). Further a comparison
principle applies to $\partial _{t}-\tilde{L}^{\varepsilon }=0$. The
assumptions of proposition \ref{PropL1viscosity} are then fulfilled, $\tilde{%
L}^{\varepsilon }\rightarrow \tilde{L}$ locally uniformly and using the
method of semirelaxed limits,%
\begin{equation*}
\tilde{v}^{\varepsilon }\rightarrow \tilde{v}\text{ locally uniformly.}
\end{equation*}%
Unwrapping the transformation, that is, setting%
\begin{equation}
u^{\mathbf{z}}\left( t,x\right) :=\phi ^{\mathbf{z}}\left( t,\tilde{v}\left(
t,\left( \psi ^{\mathbf{z}}\right) ^{-1}\left( t,x\right) \right) -\alpha ^{%
\mathbf{z}}\left( t,\left( \psi ^{\mathbf{z}}\right) ^{-1}\left( t,x\right)
\right) \right) ,  \label{Eq_Unwrap}
\end{equation}%
finishes the proof since stability of the RPDE follows directly from the
representation $\left( \ref{Eq_Unwrap}\right) $.
\end{proof}

\begin{proposition}
\label{Prop_HJB}Let 
\begin{equation*}
F\left( x,r,p,X\right) =\inf_{\alpha \in \mathcal{A}}\left\{ \mathrm{Tr}%
\left[ A\left( x;\alpha \right) ^{T}X\right] +b\left( x;\alpha \right) \cdot
p+f\left( x,r;\alpha \right) \right\}
\end{equation*}%
where $A,\sigma $ and $b,f$ satisfy the assumption of proposition \ref%
{Prop_Linear_is_Stable} uniformly with respect to $\alpha \in \mathcal{A}$
and $\nu =\left( \nu _{k}\right) _{k=1}^{d}\subset Lip^{\gamma }\left( 
\mathbb{R}^{n},\mathbb{R}^{n}\right) $ $\gamma >p+2$. Then the RPDE%
\begin{equation*}
du=F\left( x,u,Du,D^{2}u\right) dt+Du\cdot \nu \left( x\right) d\mathbf{%
z,\,\,\,}u\left( 0,\cdot \right) \equiv u_{0}
\end{equation*}%
has a unique solution $u^{\mathbf{z}}\in BUC\left( \left[ 0,T\right] \times 
\mathbb{R}^{n}\right) $ and is stable in a rough path sense.
\end{proposition}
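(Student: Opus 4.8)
The plan is to repeat, essentially verbatim, the argument of Proposition~\ref{Prop_Linear_is_Stable}; the only genuinely new feature is that the $dt$-operator is now the fully nonlinear Bellman operator $F=\inf_{\alpha\in\mathcal A}L(\cdot\,;\alpha)$ rather than a single linear $L$, while the noise $\Lambda_k(t,x,r,p)=p\cdot\nu_k(x)$ is the special case of the $\Lambda$ in Proposition~\ref{Prop_Linear_is_Stable} in which the zeroth-order coefficients $\nu_k,g_k$ vanish and $\sigma_k=\nu_k$ is autonomous. Since the data $A,b,f$ obey the hypotheses of Proposition~\ref{Prop_Linear_is_Stable} uniformly in $\alpha$, and since an infimum of degenerate-elliptic linear operators with a common modulus of continuity and a common lower Lipschitz bound in $r$ is again an operator of that type, the classical viscosity theory applies to the approximating equation: fixing smooth $(\xi^\varepsilon,z^\varepsilon)\subset C_{0}^{1,+}\times C_{0}^{1\text{-var}}$ with $(\xi^\varepsilon,z^\varepsilon)\to(\xi,\mathbf z)$ in the sense of Lemma~\ref{Lemma_Joint_RP_convergence},
\[
du^\varepsilon=F(x,u^\varepsilon,Du^\varepsilon,D^2u^\varepsilon)\,d\xi^\varepsilon+Du^\varepsilon\cdot\nu(x)\,dz^\varepsilon,\qquad u^\varepsilon(0,\cdot)=u_0,
\]
has a unique solution $u^\varepsilon\in BUC$ with a sup-norm bound uniform in $\varepsilon$, the transport term $Du^\varepsilon\cdot\nu\,\dot z^\varepsilon_t$ being merely a smooth, time-dependent first-order perturbation that does not disturb well-posedness.

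Next I would remove the $d\mathbf z$-term by the method of characteristics, exactly as the first-order term $p\cdot\sigma_k$ is removed in Proposition~\ref{Prop_Linear_is_Stable}; because there is no zeroth-order part here, no multiplicative or additive correction is needed and the transform is a pure change of space variable. Let $\psi^\varepsilon_t$ be the flow of $\frac{d}{dt}\psi^\varepsilon_t=-\nu(\psi^\varepsilon_t)\,\dot z^\varepsilon_t$, $\psi^\varepsilon_0=\mathrm{id}$. As $\nu\in Lip^\gamma$ with $\gamma>p+2$, each $\psi^\varepsilon_t$ is a $C^2$-diffeomorphism, and by the flow version of Lyons' limit theorem \cite{frizvictoirbook} the maps $\psi^\varepsilon$, $(\psi^\varepsilon)^{-1}$ and their space derivatives up to order two converge, locally uniformly in $x$ and uniformly in $t\in[0,T]$, to those of the RDE flow $\psi^{\mathbf z}$ driven by $\mathbf z$, with $D\psi^\varepsilon$, $(D\psi^\varepsilon)^{-1}$, $D^2\psi^\varepsilon$ bounded uniformly in $(\varepsilon,t)$. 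A viscosity chain rule --- a time-dependent-diffeomorphism analogue of Lemma~\ref{LemTimeChange} --- then shows that $v^\varepsilon(t,x):=u^\varepsilon(t,\psi^\varepsilon_t(x))$ is a $BUC$ viscosity solution of $\partial_t v^\varepsilon=\tilde F^\varepsilon(t,x,v^\varepsilon,Dv^\varepsilon,D^2v^\varepsilon)\,\dot\xi^\varepsilon_t$ with $v^\varepsilon(0,\cdot)=u_0$, where
\[
\tilde F^\varepsilon(t,x,r,p,X)=F\bigl(\psi^\varepsilon_t(x),\,r,\,(D\psi^\varepsilon_t(x))^{-T}p,\,(D\psi^\varepsilon_t(x))^{-T}\bigl(X-\Gamma^\varepsilon_t(x,p)\bigr)(D\psi^\varepsilon_t(x))^{-1}\bigr),
\]
and $\Gamma^\varepsilon_t(x,\cdot)$ is linear with coefficients built from $D^2\psi^\varepsilon_t$. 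The point is that $\tilde F^\varepsilon$ is again a Bellman operator of exactly the form treated in Proposition~\ref{Prop_Linear_is_Stable}: congruence $X\mapsto(D\psi^\varepsilon_t)^{-T}X(D\psi^\varepsilon_t)^{-1}$ preserves degenerate ellipticity, the change of variables is bi-Lipschitz with constants uniform in $(\varepsilon,t)$ so the boundedness, Lipschitz and lower-Lipschitz structure survives uniformly in $\varepsilon$, and hence a comparison principle holds for $\partial_t-\tilde F^\varepsilon=0$; the uniform $C^2$-convergence of the flows moreover gives $\tilde F^\varepsilon\to\tilde F$ locally uniformly with $\tilde F$ of the same type, so comparison holds for $\partial_t-\tilde F=0$ as well.

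With these facts the argument closes as in the previous results. Proposition~\ref{PropL1viscosity} (in its time-dependent form, proved in the appendix), applied with $F^\varepsilon=\tilde F^\varepsilon\to\tilde F$ and the uniform bound on $v^\varepsilon$, yields a $BUC$ function $v$, independent of the approximating sequences, with $v^\varepsilon\to v$ locally uniformly and $v$ solving $dv=\tilde F(t,x,v,Dv,D^2v)\,d\xi$; unwrapping, $u(t,x):=v(t,(\psi^{\mathbf z}_t)^{-1}(x))$ is the locally uniform limit of $u^\varepsilon$, lies in $BUC$, does not depend on the choice of $(\xi^\varepsilon,z^\varepsilon)$, and so by Definition~\ref{Def_Stable} is the unique solution of the RPDE. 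Stability in a rough path sense follows because $u^{\xi,\mathbf z}$ has been exhibited as a composition of maps each continuous in $(\xi,\mathbf z)$: $\mathbf z\mapsto\psi^{\mathbf z}$, together with its first two space derivatives and its inverse, is continuous by Lyons' limit theorem, hence $\mathbf z\mapsto\tilde F^{\mathbf z}$ is continuous for the local-uniform topology on operators; the solution map $(\xi,G)\mapsto v$ of $dv=G\,d\xi$ is continuous by the semi-relaxed-limits/comparison argument underlying Proposition~\ref{PropL1viscosity}; and $(\mathbf z,v)\mapsto v(\cdot,(\psi^{\mathbf z}_\cdot)^{-1}(\cdot))$ is continuous.

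I expect the main obstacle to be the ``viscosity chain rule'' step: making the formal change of variables along the characteristics $\psi^\varepsilon_t$ rigorous at the level of sub- and supersolutions (a time-dependent $C^2$-diffeomorphism acting on the equation, in the spirit of Lemma~\ref{LemTimeChange}) and, simultaneously, verifying that $\tilde F^\varepsilon$ inherits \emph{all} the structural properties required by Proposition~\ref{Prop_Linear_is_Stable} with constants uniform in $\varepsilon$ and that $\tilde F^\varepsilon\to\tilde F$ locally uniformly. This is precisely where the assumption $\nu\in Lip^\gamma$ with $\gamma>p+2$ is consumed: it is what guarantees that the RDE flow $\psi^{\mathbf z}$ is twice continuously differentiable in space, with the continuous dependence on $\mathbf z$ needed for the limit operator.
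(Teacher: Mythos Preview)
Your proposal is correct and follows exactly the approach the paper intends: the paper's own proof reads in its entirety ``Similar to the proof above,'' referring to Proposition~\ref{Prop_Linear_is_Stable}, and you have faithfully expanded that sketch --- removing the transport noise via the characteristic flow $\psi^\varepsilon$, noting that the transform here is a pure change of space variable since the noise has no zeroth-order part, verifying that the transformed Bellman operator $\tilde F^\varepsilon$ retains the structure needed for comparison and converges locally uniformly, and then invoking Proposition~\ref{PropL1viscosity} and unwrapping. Your identification of the $Lip^\gamma$, $\gamma>p+2$ hypothesis as precisely what is needed for $C^2$-regularity and continuous dependence of the RDE flow is also on point.
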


\begin{proof}
Similar to the proof above.
\end{proof}

\section{Applications to stochastic PDEs\label{Section_SPDE_Apps}}

The typical applications to SPDEs are path-by-path, i.e.\ by taking $\mathbf{%
z}$ to be a realization of a continuous semi-martingale $Y$ and its
stochastic area, say $\mathbf{Y}\left( \omega \right) =\left( Y,A\right) $;
the most prominent example being Brownian motion and L\'{e}vy's area. Taking
the linear case as an example, the stability result of proposition \ref%
{Prop_Linear_is_Stable} allows to identify 
\begin{equation*}
du=L\left( t,x,u,Du,D^{2}u\right) dt+\Lambda \left( t,x,u,Du\right) d\mathbf{%
z,\,\,\,}u\left( 0,\cdot \right) \equiv u_{0}
\end{equation*}%
with $\mathbf{z}=\mathbf{Y}\left( \omega \right) $ as \textit{Stratonovich
solution} to the SPDE%
\begin{equation*}
du=L\left( t,x,u,Du,D^{2}u\right) dt+\Lambda \left( t,x,u,Du\right) \circ
dY,\,\,\,u\left( 0,\cdot \right) =u_{0}.
\end{equation*}%
Indeed, under the stated assumptions, the Wong-Zakai approximations, in
which $Y$ is replaced by its piecewise linear approximation, based on some
mesh $\left\{ 0,\frac{T}{n},\frac{2T}{n}\dots ,T\right\} $, the approximate
solution will converge (locally uniformly on $\left[ 0,T\right] \times 
\mathbb{R}^{n}$ and in probability, say) to the solution of%
\begin{equation*}
du=L\left( t,x,u,Du,D^{2}u\right) dt+\Lambda \left( t,x,u,Du\right) d\mathbf{%
Y},\,\,\,u\left( 0,\cdot \right) =u_{0},
\end{equation*}%
as constructed in proposition \ref{Prop_Linear_is_Stable}. In view of
well-known Wong-Zakai approximation results for SPDEs, ranging from \cite%
{MR1313027, MR1353194} to \cite{MR2052265, MR2268661}, the rough PDE
solution is then identified as Stratonovich solution. (At least for $L$
uniformly elliptic: the (Stratonovich) integral interpretations can break
down in degenerate situations; as example, consider non-differentiable
initial data $u_{0}$ and the (one-dimensional) random transport equation $%
du=u_{x}\circ dB$ with explicit "Stratonovich" solution $u_{0}\left(
x+B_{t}\right) $. A similar situation occurs for the classical transport
equation $\dot{u}=u_{x}$, of course.) Motivated by this, if $u^{\mathbf{z}}$
is a RPDE solution of

\begin{equation*}
du=F\left( t,x,u,Du,D^{2}u\right) dt+\Lambda \left( t,x,u,Du\right) d\mathbf{%
z},\,\,\,u\left( 0,\cdot \right) =u_{0},
\end{equation*}%
then we call $u^{\mathbf{z}}$ with $\mathbf{z}=\mathbf{Y}\left( \omega
\right) $ as \textit{Stratonovich solution} and write 
\begin{equation*}
du=F\left( t,x,u,Du,D^{2}u\right) dt+\Lambda \left( t,x,u,Du\right) \circ
dY,\,\,\,u\left( 0,\cdot \right) =u_{0}.
\end{equation*}

The following example was suggested in \cite{MR1659958} and carefully worked
out in \cite{MR1920103, MR2285722}.

\begin{example}[Pathwise stochastic control]
\label{ExStochControl}Consider%
\begin{equation*}
dX=b\left( X;\alpha \right) dt+W\left( X;\alpha \right) \circ d\tilde{B}%
+V\left( X\right) \circ dB,
\end{equation*}%
where $b,W,V$ are (collections of) sufficiently nice vector fields (with $%
b,W $ dependent on a suitable control $\alpha =\alpha \left( t\right) \in 
\mathcal{A}$, applied at time $t$) and $\tilde{B},B$ are multi-dimensional
(independent)\ Brownian motions. Define\footnote{%
Remark that any optimal control $\alpha \left( \cdot \right) $ here will
depend on knowledge of the entire path of $B$. Such anticipative control
problems and their link to classical stochastic control problems were
discussed early on by Davis and Burnstein \cite{MR1275134}.}%
\begin{equation*}
v\left( x,t;B\right) =\inf_{\alpha \in \mathcal{A}}\mathbb{E}\left[ \left.
\left( g\left( X_{T}^{x,t}\right) +\int_{t}^{T}f\left( X_{s}^{x,t},\alpha
_{s}\right) ds\right) \right\vert B\right]
\end{equation*}%
where $X^{x,t}$ denotes the solution process to the above SDE started at $%
X\left( t\right) =x$. Then, at least by a formal computation,%
\begin{equation*}
dv+\inf_{\alpha \in \mathcal{A}}\left[ b\left( x,\alpha \right)
Dv+L_{a}v+f\left( x,\alpha \right) \right] dt+Dv\cdot \nu \left( x\right)
\circ dB=0
\end{equation*}%
with terminal data $v\left( \cdot ,T\right) \equiv g$, and $L_{\alpha }=\sum
W_{i}^{2}$ in H\"{o}rmander form. Setting $u\left( x,t\right) =v\left(
x,T-t\right) $ turns this into the initial value (Cauchy) problem, 
\begin{equation*}
du=\inf_{\alpha \in \mathcal{A}}\left[ b\left( x,\alpha \right)
Du+L_{a}u+f\left( x,\alpha \right) \right] dt+Du\cdot V\left( x\right) \circ
dB_{T-\cdot }
\end{equation*}%
with initial data $\,u\left( \cdot ,0\right) \equiv g$; and hence of a form
which is covered by theorem \ref{Prop_HJB}.(Moreover, the rough driving
signal in proposition \ref{Prop_HJB} is taken as $\mathbf{z}_{t}:=\mathbf{B}%
_{T-t}\left( \omega \right) $ where $\mathbf{B}\left( \omega \right) $ is a
fixed Brownian motion).
\end{example}

Using theorem \ref{Thm_Splitting} we immediately get a splitting result:

\begin{example}[Splitting HJB-equations]
Let $B$ be standard $d-$dimensional Brownian motion. Then the SPDE%
\begin{eqnarray}
du &=&\inf_{\alpha \in \mathcal{A}}\left\{ \mathrm{Tr}\left[ \sigma \left(
x;\alpha \right) \sigma \left( x;\alpha \right) ^{T}D^{2}u\right] +b\left(
x;\alpha \right) \cdot Du+f\left( x;\alpha \right) \right\} dt+\left(
Du\cdot \nu \left( x\right) \right) \circ dB,  \notag \\
\text{ }u\left( \omega ;0,x\right) &=&u_{0}\left( x\right) ,
\label{EqHJBRPDE}
\end{eqnarray}%
has a unique a unique solution $u$ if $\sigma \left( x,\alpha \right) :%
\mathbb{R}^{e}\times \mathcal{A}\rightarrow \mathbb{R}^{e\times e^{\prime }}$
and $b\left( x,\alpha \right) :\mathbb{R}^{e}\times \mathcal{A}\rightarrow 
\mathbb{R}^{e}$ are Lipschitz continuous in $x$, uniformly in $\alpha \in 
\mathcal{A},$ $\nu =\left( \nu _{1},\dots ,\nu _{d}\right) \subset \mathrm{%
Lip}^{\gamma }\left( \mathbb{R}^{e};\mathbb{R}^{e}\right) $ with $\gamma >4$%
. Denote $\left\{ \mathbf{P}_{u},u\geq 0\right\} $ the solution operator%
\footnote{%
Note that in the case $\xi =t$ one can use a one-parameter semigroup since $%
\dot{\xi}^{n}$ $=2$ on the time interval on which the approximation evolve.}
of 
\begin{equation}
du=\inf_{\gamma \in \mathcal{A}}\left\{ \mathrm{Tr}\left[ \sigma \left(
x;\alpha \right) \sigma \left( x;\alpha \right) ^{T}D^{2}u\right] +b\left(
x;a\right) \cdot Du+f\left( x;\alpha \right) \right\} dt,  \label{EqHJB}
\end{equation}%
i.e.\ $\mathbf{P}_{t}u_{0}\left( .\right) =u\left( t,.\right) ,$ and $%
\left\{ \mathbf{Q}_{s,t},0\leq s\leq t\leq T\right\} $ the solution operator
given by $\mathbf{Q}_{s,t}\varphi \left( .\right) =\varphi \left( \pi
_{-V}\left( s,.;B\right) _{t}\right) ,$ with $\pi _{-V}\left( s,x;B\right)
_{t}$ the SDE solution of 
\begin{equation}
dy=-V\left( y\right) \circ dB\text{, }y_{s}=x\in \mathbb{R}^{e}\text{.}
\label{EqRDe}
\end{equation}%
Then\footnote{%
Apriori the leftmost two terms would have to be $\mathbf{Q}_{\left\lfloor
t/n\right\rfloor ,t}\circ \mathbf{P}_{t-\left\lfloor t/n\right\rfloor n}$.
However, the claimed convergence follows from Lyons' limit theorem.},%
\begin{equation*}
u^{n;\text{Split}}\left( t,x\right) :=\dprod\limits_{i=0}^{\left\lfloor
t/n\right\rfloor -1}\left[ \mathbf{Q}_{i/n,i/n+1/n}\circ \mathbf{P}_{1/n}%
\right] \left( u_{0}\left( x\right) \right) \rightarrow u\left( t,x\right) 
\text{ as }n\rightarrow \infty
\end{equation*}%
and the convergence also holds locally uniformly.
\end{example}

Thus, equation $\left( \ref{EqHJBRPDE}\right) $ can be approximated by
solutions of a standard HJB equation $\left( \ref{EqHJBRPDE}\right) $ and by
solutions of the RDE $\left( \ref{EqRDe}\right) $ (for numerical schemes for
HJB, see \cite{FlemingSoner06:ControlBook}).

\begin{example}[Linear SPDEs, Filtering]
Let $L$ and $\Lambda $ be as in proposition \ref{Prop_Linear_is_Stable}.
Then there exists a unique solution to 
\begin{equation*}
du=L\left( x,u,Du,D^{2}u\right) dt+\sum_{k=1}^{d}\Lambda _{k}\left(
t,x,u,Du\right) \circ dB^{k},u\left( 0,.\right) =u_{0}\left( .\right)
\end{equation*}%
Denote by $\left\{ \mathbf{P}_{u},0\leq u\leq T\right\} $ the solution
operator%
\begin{equation*}
\varphi \longmapsto v\text{ with }v\text{ solution of }\partial v=L\left(
x,v,Dv,D^{2}v\right) dt,v\left( 0,\cdot \right) =\varphi \left( \cdot \right)
\end{equation*}%
and by $\left\{ \mathbf{Q}_{s,t},0\leq s\leq t\right\} $ the solution
operator 
\begin{equation*}
\varphi \longmapsto y\text{ with }y\text{ solution of }dy=\Lambda \left(
t,x,u,Du\right) \circ dB,v\left( 0,\cdot \right) =\varphi \left( \cdot
\right)
\end{equation*}%
Then,%
\begin{equation*}
u^{n;\text{Split}}\left( t,x\right) :=\dprod\limits_{i=0}^{\left\lfloor
t/n\right\rfloor -1}\left[ \mathbf{Q}_{i/n,i/n+1/n}\circ \mathbf{P}_{1/n}%
\right] \left( u_{0}\left( x\right) \right) \rightarrow u\left( t,x\right) 
\text{ as }n\rightarrow \infty
\end{equation*}%
and the convergence also holds locally uniformly.
\end{example}

\appendix

\section{Appendix: Time-dependent\label{app_timedep} $F$}

To deal with time-dependent $F$ we need the additional assumption of uniform
bounds on the derivatives of the approximating sequence $\xi ^{\varepsilon
}. $

\begin{proposition}
\label{Prop_TimedepF}Let $\left( \xi ^{\varepsilon }\right) _{\varepsilon
}\subset C_{0}^{1,+}\left( \left[ 0,T\right] ,\mathbb{R}\right) ,$ $%
\sup_{\varepsilon }\left\vert \dot{\xi}^{\varepsilon }\right\vert _{\infty ;%
\left[ 0,T\right] }<\infty ,$ converge uniformly to some $\xi \in C_{0}^{1-%
\text{var;}+}\left( \left[ 0,T\right] ,\mathbb{R}\right) $ as $\varepsilon
\rightarrow 0.$ Assume $\left( v^{\varepsilon }\right) _{\varepsilon
}\subset BUC\left( \left[ 0,T\right] \times \mathbb{R}^{e},\mathbb{R}\right) 
$ are locally uniformly bounded viscosity solutions of%
\begin{equation*}
\partial _{t}v^{\varepsilon }=F^{\varepsilon }\left( t,x,v^{\varepsilon
},Dv^{\varepsilon },D^{2}v^{\varepsilon }\right) \dot{\xi}_{t}^{\varepsilon
},\text{ }v^{\varepsilon }\left( 0,x\right) =v_{0}\left( x\right) .
\end{equation*}%
with $F^{\varepsilon }:\left[ 0,T\right] \times \mathbb{R}^{e}\times \mathbb{%
R}\times \mathbb{R}^{e}\times \mathbb{S}^{e}\mathbb{\rightarrow R}$ a
continuous and degenerate elliptic function. Further, assume that $%
F^{\varepsilon }$ converges locally uniformly to a continuous, degenerate
elliptic function $F$ and that a comparison result holds for $\partial
_{t}-F^{\varepsilon }=0$ and $\partial _{t}-F=0$. Then there exists a$\ v$
such that%
\begin{equation*}
v^{\varepsilon }\rightarrow v\text{ locally uniformly as }\varepsilon
\rightarrow 0.
\end{equation*}%
Further, $v$ does not depend on the choice of the sequence approximating $%
\xi $ and we also write $v\equiv v^{\xi }$ to emphasize the dependence on $%
\xi $ and say that $v$ solves 
\begin{equation*}
dv=F\left( t,x,v,Dv,D^{2}v\right) d\xi _{t},\text{ }v\left( 0,x\right)
=v_{0}\left( x\right) .
\end{equation*}
\end{proposition}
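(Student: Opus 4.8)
The plan is to follow the proof of Proposition~\ref{PropL1viscosity}: absorb $\dot\xi^{\varepsilon}$ into the time variable, then run the method of semi-relaxed limits together with the assumed comparison principle. The new features are that after the time change the equation is no longer time-independent, and that the limiting time change is only Lipschitz; the extra hypothesis $m:=\sup_{\varepsilon}|\dot\xi^{\varepsilon}|_{\infty;[0,T]}<\infty$ is exactly what keeps this tractable. Since each $\xi^{\varepsilon}\in C_{0}^{1,+}$ is a $C^{1}$-diffeomorphism of $[0,T]$ onto itself (here $\xi^{\varepsilon}_{T}=T$ enters), the function $w^{\varepsilon}(t,x):=v^{\varepsilon}((\xi^{\varepsilon})^{-1}(t),x)$ is again in $BUC$, locally uniformly bounded on $[0,T]\times\mathbb{R}^{e}$, and by Lemma~\ref{LemTimeChange}(1) a viscosity solution of
\[
\partial_{t}w^{\varepsilon}=\hat F^{\varepsilon}(t,x,w^{\varepsilon},Dw^{\varepsilon},D^{2}w^{\varepsilon}),\qquad w^{\varepsilon}(0,\cdot)=v_{0},\qquad \hat F^{\varepsilon}(t,x,r,p,X):=F^{\varepsilon}((\xi^{\varepsilon})^{-1}(t),x,r,p,X).
\]

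Next I would identify the limit of the reparametrised coefficient. The bound $m<\infty$ makes $\xi$ an $m$-Lipschitz function, so $\xi_{t}=\int_{0}^{t}\dot\xi_{u}\,du$ with $\dot\xi\in L^{\infty}([0,T];[0,m])$ and no singular part; it also gives $((\xi^{\varepsilon})^{-1})'=1/(\dot\xi^{\varepsilon}\circ(\xi^{\varepsilon})^{-1})\ge 1/m$, so the $(\xi^{\varepsilon})^{-1}$ form a family of increasing self-maps of $[0,T]$ with $(\xi^{\varepsilon})^{-1}(t_{2})-(\xi^{\varepsilon})^{-1}(t_{1})\ge(t_{2}-t_{1})/m$. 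By Helly's selection theorem a subsequence converges pointwise to an increasing $g:[0,T]\to[0,T]$ with $g(0)=0$, $g(T)=T$ and the same lower bound; hence $g$ is strictly increasing, has at most countably many discontinuities (all jumps), and a short squeezing argument shows each jump of $g$ sits over an interval on which $\xi$ is \emph{constant}, i.e.\ $d\xi=0$ there. In particular $(\xi^{\varepsilon})^{-1}(t)\to g(t)$ for a.e.\ $t$, and then dominated convergence (using $F^{\varepsilon}\to F$ locally uniformly together with local boundedness) yields $\int_{0}^{t}\hat F^{\varepsilon}(s,x,r,p,X)\,ds\to\int_{0}^{t}\hat F(s,x,r,p,X)\,ds$ locally uniformly in $(t,x,r,p,X)$, where $\hat F(t,x,r,p,X):=F(g(t),x,r,p,X)$ is a Carath\'{e}odory Hamiltonian, measurable in $t$ and continuous, degenerate elliptic in $(x,r,p,X)$.

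For the limit itself I would set $\overline{w}:=\limsup^{\ast}_{\varepsilon}w^{\varepsilon}$, $\underline{w}:=\liminf_{\ast,\varepsilon}w^{\varepsilon}$, well defined by local boundedness. The convergence of the time-integrals above is precisely the stability hypothesis for viscosity solutions of parabolic equations with $L^{1}$ (measurable) time-dependence, going back to \cite{Ishii85:HJBDiscontHamiltonian}, so $\overline{w}$ is a subsolution and $\underline{w}$ a supersolution of $\partial_{t}-\hat F=0$ with $\overline{w}(0,\cdot)\le v_{0}\le\underline{w}(0,\cdot)$. Since $\hat F$ is obtained from $F$ by a monotone reparametrisation of time, the assumed comparison for $\partial_{t}-F=0$ transfers to $\partial_{t}-\hat F=0$ (the Carath\'{e}odory analogue of Lemma~\ref{LemTimeChange}); with the trivial inequality $\overline{w}\ge\underline{w}$ this produces a continuous $w:=\overline{w}=\underline{w}$, and a Dini-type argument upgrades $w^{\varepsilon}\to w$ to local uniform convergence. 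The limiting equation depends on $g$ only through the a.e.-defined coefficient $t\mapsto F(g(t),\cdot)$, and any two admissible $g$ are generalised inverses of the same $\xi$, hence agree off their countable jump sets; so $w$ is independent of the subsequence and of the approximating sequence $(\xi^{\varepsilon})$, whence the whole family converges. Setting $v(t,x):=w(\xi_{t},x)$ and using $v^{\varepsilon}(t,x)=w^{\varepsilon}(\xi^{\varepsilon}_{t},x)$ with $\xi^{\varepsilon}\to\xi$ uniformly and $w^{\varepsilon}\to w$ locally uniformly gives $v^{\varepsilon}\to v$ locally uniformly; the Carath\'{e}odory analogue of Lemma~\ref{LemTimeChange}(2) identifies $v$ as a solution of $dv=F(t,x,v,Dv,D^{2}v)\,d\xi_{t}$, $v(0,\cdot)=v_{0}$, and if $\xi_{T}\neq T$ one first rescales as in the Remark after Proposition~\ref{PropApproxTime}.

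The step I expect to be the real obstacle is the passage to the limit in the time-reparametrised equation when $\xi$ is merely Lipschitz and may have flat stretches, so that $\hat F$ acquires countably many time-discontinuities. This is exactly where the uniform bound $\sup_{\varepsilon}|\dot\xi^{\varepsilon}|_{\infty}<\infty$ is indispensable: it forces $\dot\xi\in L^{\infty}$ (so the limit equation $\partial_{t}v=F(t,x,v,Dv,D^{2}v)\dot\xi_{t}$ is a genuine Carath\'{e}odory equation, with no singular part), it supplies the uniform lower bound $((\xi^{\varepsilon})^{-1})'\ge 1/m$ that controls the reparametrised times, and it makes one carry out the stability and comparison steps in the $L^{1}$-viscosity framework rather than the continuous-coefficient one that sufficed for Proposition~\ref{PropL1viscosity}. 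When $\dot\xi>0$ a.e.\ (e.g.\ $\xi=\mathrm{id}$, the case relevant for splitting) $g=\xi^{-1}$ is continuous, $\hat F$ is continuous in $t$, and the argument collapses to a verbatim repetition of the proof of Proposition~\ref{PropL1viscosity} with a time-dependent but continuous coefficient.
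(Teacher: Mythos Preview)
Your approach is correct in spirit but genuinely different from the paper's, and it is worth spelling out the contrast.

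Both arguments begin with the same time change $w^{\varepsilon}(t,x)=v^{\varepsilon}\bigl((\xi^{\varepsilon})^{-1}(t),x\bigr)$, but diverge immediately after. The paper does \emph{not} pass to a Carath\'eodory framework. Instead it partitions the domain of $\xi$ into (implicitly finitely many) maximal intervals $[s_i,t_i]$ on which $\xi$ is strictly increasing and intervals $[t_i,s_{i+1}]$ on which $\xi$ is constant, and treats the two types separately. On the strictly increasing pieces, $(\xi^{\varepsilon})^{-1}\to\xi^{-1}$ is continuous, so $\tilde F^{\varepsilon}(t,\cdot)=F^{\varepsilon}\bigl((\xi^{\varepsilon})^{-1}(t),\cdot\bigr)$ converges locally uniformly and the ordinary (continuous-coefficient) semi-relaxed-limit argument plus the assumed comparison for $\partial_t-F=0$ give $w^{\varepsilon}\to w$ there. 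On the constant pieces the paper abandons $w^{\varepsilon}$ and goes back to $v^{\varepsilon}$ itself: since $\xi$ is flat there and $\sup_{\varepsilon}\lvert\dot\xi^{\varepsilon}\rvert_{\infty}<\infty$, the Hamiltonian $F\dot\xi^{\varepsilon}$ tends to $0$ and standard stability forces $v^{\varepsilon}$ to a time-constant limit. The pieces are then glued. Thus the extra hypothesis $\sup_{\varepsilon}\lvert\dot\xi^{\varepsilon}\rvert_{\infty}<\infty$ is used in the paper only on the \emph{flat} intervals, and the whole proof stays within continuous-coefficient viscosity theory.

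Your route via Helly on $(\xi^{\varepsilon})^{-1}$ and the $L^{1}$-viscosity framework of Ishii is more robust: it does not need $\xi$ to decompose into finitely many monotone and flat pieces, and it packages the flat stretches as the jump set of $g$ rather than treating them by a separate argument. The cost is that you must import stability and comparison results for measurable-in-time Hamiltonians; in particular, the step where you say ``the assumed comparison for $\partial_t-F=0$ transfers to $\partial_t-\hat F=0$'' is not covered by Lemma~\ref{LemTimeChange} as stated (that lemma needs $\xi\in C^{1,+}_0$, whereas your $g$ is only strictly increasing with jumps) and would need its own Carath\'eodory-type justification. The paper sidesteps this entirely by never leaving the continuous-coefficient setting.
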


\begin{proof}
Set $w^{\varepsilon }\left( t,x\right) :=v^{\varepsilon }\left( \left( \xi
^{\varepsilon }\right) ^{-1}\left( t\right) ,x\right) $ and divide $\left[
0,T\right] $ into intervals on which $\xi $ is strictly increasing resp.\
constant, i.e.\ $0=s_{1}\leq t_{1}\leq s_{2}\leq \cdots \leq t_{n}=T,$ $\xi $
strictly increasing on $\left[ s_{i},t_{i}\right] $, constant on $\left[
t_{i},s_{i+1}\right] $. By lemma \ref{LemTimeChange} on intervals $\left[
s_{i},t_{i}\right] $ 
\begin{equation*}
v^{\varepsilon }\text{ is a solution of }\partial _{t}-F\dot{\xi}%
^{\varepsilon }=0\text{ iff }w^{\varepsilon }\text{ is a solution of }%
\partial _{t}-\tilde{F}^{\varepsilon }=0
\end{equation*}%
where $\tilde{F}^{\varepsilon }\left( t,x,r,p,X\right) =F\left( \left( \xi
^{\varepsilon }\right) ^{-1}\left( t\right) ,x,r,p,X\right) $. Let 
\begin{equation*}
\overline{w}:=\lim \sup_{\varepsilon }\text{ }^{\ast }w^{\varepsilon }\text{
and }\underline{w}:=\lim \inf_{\varepsilon }\text{ }_{\ast }w^{\varepsilon }
\end{equation*}%
and note that on intervals $\left[ s_{i},t_{i}\right] $ 
\begin{equation*}
\tilde{F}^{\varepsilon }\left( t,x,r,p,X\right) \rightarrow F\left( \xi
^{-1}\left( t\right) ,x,r,p,X\right) =:\tilde{F}\left( t,x,r,p,X\right) 
\text{ locally uniformly.}
\end{equation*}%
Standard viscosity theory tells us that on intervals $\left[ s_{i},t_{i}%
\right] ,$ $\overline{w}$ and $\underline{w}$ are sub- resp. supersolutions
of $\partial _{t}-\tilde{F}=0$. Using the method of semi-relaxed limits (by
definition, $\overline{w}\geq \underline{w}$ and the reversed inequality
follows from comparison) conclude that $w:=\overline{w}=\underline{w}$ and
that for every compact set $K\subset \mathbb{R}^{n}$ (by using a Dini-type
argument), 
\begin{equation*}
\left\vert w^{\varepsilon }-w\right\vert _{\infty ;\left[ s_{i},t_{i}\right]
\times K}=\sup_{t\in \left[ s_{i},t_{i}\right] ,\text{ }x\in K}\left\vert
w^{\varepsilon }\left( t,x\right) -w\left( t,x\right) \right\vert
\rightarrow 0\text{ as }\varepsilon \rightarrow 0.
\end{equation*}%
Now define 
\begin{equation*}
v\left( t,x\right) :=\left\{ 
\begin{array}{ll}
w\left( \xi \left( t\right) ,x\right) , & \xi ^{-1}\left( s_{i}\right) \leq
t\leq \xi ^{-1}\left( t_{i}\right) \\ 
w\left( \xi \left( t_{i}\right) ,x\right) , & \xi ^{-1}\left( t_{i}\right)
<t<\xi ^{-1}\left( s_{i+1}\right)%
\end{array}%
\right.
\end{equation*}%
We get the claimed convergence $v^{\varepsilon }\rightarrow v$ on intervalls 
$\left[ \xi ^{-1}\left( s_{i}\right) ,\xi ^{-1}\left( t_{i}\right) \right] $%
. However, on $\left[ \xi ^{-1}\left( t_{i}\right) ,\xi ^{-1}\left(
s_{i+1}\right) \right] ,$ $v^{\varepsilon }$ is by definition viscosity
solution of $\partial _{t}-F\dot{\xi}^{\varepsilon }=0$ with initial
condition $v^{\varepsilon }\left( \xi ^{-1}\left( t_{i}\right) ,.\right) $
and $F\dot{\xi}^{\varepsilon }\rightarrow _{\varepsilon }0$ locally
uniformly since $\sup_{t}\left\vert \dot{\xi}^{\varepsilon }\left( t\right)
\right\vert $ is uniformly bounded in $\varepsilon $ by assumption. Hence
the standard stability result of viscosity theory applies and $%
v^{\varepsilon }$ converges locally uniformly on $\left[ \xi ^{-1}\left(
t_{i}\right) ,\xi ^{-1}\left( s_{i+1}\right) \right] $ against the
constant-in-time function $v^{\varepsilon }\left( \xi ^{-1}\left(
t_{i}\right) ,.\right) ,$ the only solution to $\partial _{t}=0$ with
initial condition $v^{\varepsilon }\left( \xi ^{-1}\left( t_{i}\right)
,.\right) $. This proves the claimed convergence. Further, note that $v$ is
given as the unique viscosity solution of $\partial _{t}-\tilde{F}=0$, hence
every other sequence approximating $\xi $ will lead to the same limit.
\end{proof}

The proof of the main theorem (theorem \ref{Thm_Splitting}) and applications
to examples adapt now in a straightforward way to time-dependent $F$.

\section{Appendix: Generalized viscosity solutions}

Section \ref{SecPDEdiscontTime} and appendix \ref{app_timedep} extend the
notion of viscosity solutions to equations of the form%
\begin{equation}
du=F\left( t,x,u,Du,D^{2}u\right) d\xi \left( t\right) ,\text{ }u\left(
0,x\right) =u_{0}\left( x\right) .  \label{EqPDE_Badtime}
\end{equation}%
with $\xi \in C^{1-v\text{ar};+}\left( \left[ 0,T\right] ,\mathbb{R}\right)
. $ Generalizations of viscosity solutions go back to \cite%
{Ishii85:HJBDiscontHamiltonian},\cite{LionPerthame87:RemarksonHJB} and for
the parabolic case \cite%
{Nunziante92:ExistenceUniquenessParabolicViscosityDiscTimedependence}. Let
us recall the definition given in \cite%
{Nunziante92:ExistenceUniquenessParabolicViscosityDiscTimedependence}.

\begin{condition}
\label{Cond_1}$F\left( \cdot ,x,r,p,X\right) \in L^{1}\left( \left(
0,T\right) ,\mathbb{R}\right) $ for all $\left( x,r,p,X\right) \in \mathbb{R}%
^{e}\times \mathbb{R\times \mathbb{R}}^{e}\times \mathbb{S}^{e}$ and $F$ is
continuous on $\mathbb{R}^{e}\times \mathbb{R\times \mathbb{R}}^{e}\times 
\mathbb{S}^{e}$ for almost all $t\in \left( 0,T\right) $
\end{condition}

\begin{condition}
\label{Cond_2}$F\left( t,x,\cdot ,p,X\right) $ is nondecreasing on $\mathbb{R%
}$ for all $t\in \left( 0,T\right) $ and for all $\left( x,p,X\right) \in 
\mathbb{R}^{e}\mathbb{\times \mathbb{R}}^{e}\times \mathbb{S}^{e}$.
\end{condition}

\begin{definition}
\label{Def_Generalized_Visc}Let $F$ satisfy conditions \ref{Cond_1} and \ref%
{Cond_2}. A locally bounded uniformly upper semicontinuous function $u\in
BUC\left( \left[ 0,T\right] \times \mathbb{R}^{e}\right) $ is called a
generalized subsolution of%
\begin{equation}
du=F\left( t,x,u,Du,D^{2}u\right) ,\text{ }u\left( 0,x\right) =u_{0}\left(
x\right)  \label{Eq_BadPDE}
\end{equation}%
if for any $\left( \hat{t},\hat{x}\right) \in \left[ 0,T\right] \times 
\mathbb{R}^{e},b\in L^{1}\left( \left[ 0,T\right] ,\mathbb{R}\right) ,\phi
\in C^{2}\left( \mathbb{R}^{e},\mathbb{R}\right) ,G:\left[ 0,T\right] \times 
\mathbb{R}^{e}\times \mathbb{R\times \mathbb{R}}^{e}\times \mathbb{S}%
^{e}\rightarrow \mathbb{R}$ continuous and degenerate parabolic, such that%
\begin{equation*}
u\left( t,x\right) +\int_{0}^{t}b\left( r\right) dr-\phi \left( t,x\right) 
\text{ attains a local maximum at }\left( \hat{t},\hat{x}\right) ,
\end{equation*}%
and%
\begin{eqnarray*}
b\left( t\right) +G\left( t,x,r,p,X\right) &\leq &F\left( t,x,r,p,X\right) 
\text{ for a.e. }t\in B_{\delta }\left( \hat{t}\right) \text{ and } \\
\text{for all }\left( x,r,p,X\right) &\in &B_{\delta }\left( \hat{x},u\left( 
\hat{t},\hat{x}\right) ,D\phi \lvert _{\hat{t},\hat{x}},D^{2}\phi \lvert _{%
\hat{t},\hat{x}}\right) \text{ for some }\delta >0
\end{eqnarray*}%
it follows that 
\begin{equation*}
b\left( \hat{t}\right) +G\left( \hat{x},u\left( \hat{t},\hat{x}\right)
,D\phi \lvert _{\hat{t},\hat{x}},D^{2}\phi \lvert _{\hat{t},\hat{x}}\right)
\leq 0.
\end{equation*}%
A locally bounded uniformly lower semicontinuous function is called a
supersolution if the above estimates hold when one replaces maximum by
minimum and reverses the inequality sign.
\end{definition}

Note that equation $\left( \ref{Eq_BadPDE}\right) $ is covered by this
definition. However, it is quite cumbersome to derive existence, comparison
and stability results in this very general setting and in the case of
interest to us, the time-discontinuouity only appears multiplicatively.

\begin{proposition}
Under the assumptions of proposition \ref{Prop_TimedepF} and additionaly $%
\xi \in W^{1,1}$ the function $u=u^{\xi }$ is a viscosity solution of 
\begin{equation*}
du=F\left( t,x,u,Du,D^{2}u\right) d\xi _{t},\text{ }u\left( 0,x\right)
=u_{0}\left( x\right)
\end{equation*}%
in the sense of definition \ref{Def_Generalized_Visc}.
\end{proposition}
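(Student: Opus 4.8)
The plan is to read the assertion off the construction of $u=u^{\xi}$ in Proposition~\ref{Prop_TimedepF}, exploiting that the Nunziante notion of Definition~\ref{Def_Generalized_Visc} is engineered to be robust under absolutely continuous time changes and $L^{1}$-in-time right-hand sides. As in the proof of Proposition~\ref{Prop_TimedepF} I would first split $[0,T]$ into intervals on which $\xi$ is constant and intervals on which $\xi$ is strictly increasing. On a constancy interval $\dot\xi\equiv 0$ a.e., so $F\dot\xi=0$ a.e.\ and $u$ is constant in time there; a function that is constant in time is trivially a generalized sub- and supersolution of $\partial_{t}u=0$ in the sense of Definition~\ref{Def_Generalized_Visc}. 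Since that notion is local in $t$, it remains to treat a strictly increasing interval, which after relabelling I take to be $[0,T]$; there $\xi^{-1}$ is continuous and strictly increasing, $u(t,x)=w(\xi(t),x)$ with $w$ the classical viscosity solution of $\partial_{s}w=\tilde F(s,x,w,Dw,D^{2}w)$, $\tilde F(s,\cdot)=F(\xi^{-1}(s),\cdot)$ now being jointly continuous, and $\dot\xi>0$ a.e.\ on this interval (the remaining null/vanishing set for $\dot\xi$ is disposed of separately, since there the hypothesis inequality already reads $b(t)+G(t,\cdot)\le 0$).

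To verify the subsolution requirement of Definition~\ref{Def_Generalized_Visc} for $u$ (the supersolution case being symmetric) take $(\hat t,\hat x)$, $b\in L^{1}([0,T],\mathbb{R})$, $\phi\in C^{2}(\mathbb{R}^{e},\mathbb{R})$ and a continuous degenerate-parabolic $G$ such that $u(t,x)+\int_{0}^{t}b(r)\,dr-\phi(x)$ has a local maximum at $(\hat t,\hat x)$ and $b(t)+G(t,x,r,p,X)\le F(t,x,r,p,X)\dot\xi_{t}$ for a.e.\ $t$ near $\hat t$ and all $(x,r,p,X)$ near $(\hat x,u(\hat t,\hat x),D\phi(\hat x),D^{2}\phi(\hat x))$. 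Substituting $s=\xi(t)$, writing $\hat s=\xi(\hat t)$ and changing variables in the integral, the maximum becomes a local maximum at $(\hat s,\hat x)$ of $w(s,x)+\int_{0}^{s}\tilde b(\rho)\,d\rho-\phi(x)$ with $\tilde b(\rho):=b(\xi^{-1}(\rho))(\xi^{-1})'(\rho)\in L^{1}$; dividing the pointwise inequality by $\dot\xi_{t}$ and pushing it forward through $\xi^{-1}$ turns it into $\tilde b(s)+\tilde G(s,x,r,p,X)\le\tilde F(s,x,r,p,X)$ for a.e.\ $s$ near $\hat s$, with $\tilde G(s,\cdot):=G(\xi^{-1}(s),\cdot)(\xi^{-1})'(s)$. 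Feeding this data into $w$ — a classical, continuous-coefficient viscosity solution, for which the generalized subsolution inequality of Definition~\ref{Def_Generalized_Visc} follows from the classical one by the lemma of \cite{Ishii85:HJBDiscontHamiltonian,Nunziante92:ExistenceUniquenessParabolicViscosityDiscTimedependence} — yields, at a common Lebesgue point, $\tilde b(\hat s)+\tilde G(\hat s,\hat x,u(\hat t,\hat x),D\phi(\hat x),D^{2}\phi(\hat x))\le 0$, which unwinds (multiplying back by $\dot\xi_{\hat t}>0$ at a Lebesgue point) to the required $b(\hat t)+G(\hat t,\hat x,u(\hat t,\hat x),D\phi(\hat x),D^{2}\phi(\hat x))\le 0$.

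A more robust alternative is to argue from the approximating sequence directly: each $v^{\varepsilon}$ from Proposition~\ref{Prop_TimedepF} solves a PDE with continuous coefficient $F^{\varepsilon}\dot\xi^{\varepsilon}$, hence is a generalized viscosity solution in the sense of Definition~\ref{Def_Generalized_Visc}; since $\sup_{\varepsilon}|\dot\xi^{\varepsilon}|_{\infty}<\infty$ and the distribution functions $\xi^{\varepsilon}$ converge uniformly to the continuous $\xi$ with common total mass $T$, the measures $\dot\xi^{\varepsilon}_{s}\,ds$ converge weakly to $\dot\xi_{s}\,ds$, and splitting $F^{\varepsilon}\dot\xi^{\varepsilon}-F\dot\xi=(F^{\varepsilon}-F)\dot\xi^{\varepsilon}+F(\dot\xi^{\varepsilon}-\dot\xi)$ one checks $\int_{t_{1}}^{t_{2}}(F^{\varepsilon}\dot\xi^{\varepsilon}-F\dot\xi)(s,\cdot)\,ds\to 0$ uniformly on compacts; a stability argument for generalized viscosity solutions under this mode of coefficient convergence then transfers the subsolution property from the $v^{\varepsilon}$ to $u$. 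The main obstacle in either route is the bookkeeping around the $L^{1}$-in-time test objects: the time change $\xi$ need not be bi-Lipschitz — indeed $(\xi^{-1})'$ is merely $L^{1}$ unless $\dot\xi$ is continuous and bounded away from $0$ — so one must check carefully that $(\tilde b,\phi,\tilde G)$ still lies in the admissible test class for $w$, which is precisely why $\xi\in W^{1,1}$ (so that $d\xi=\dot\xi\,dt$ with $\dot\xi\in L^{1}$, matching Condition~\ref{Cond_1}) is imposed; relatedly, one must have at hand the elementary but not entirely trivial identification of classical and generalized viscosity solutions for continuous coefficients in the parabolic, multiplicative-time-discontinuity form needed here. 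The remaining points — the reduction to strictly increasing $\xi$, the weak convergence of the driving measures, and the symmetry between sub- and supersolutions — are routine.
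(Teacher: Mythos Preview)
Your primary route is exactly the paper's argument: partition $[0,T]$ into intervals of strict increase and intervals of constancy of $\xi$, observe that on the latter $F\dot\xi=0$ a.e.\ so the constant-in-time $u$ is trivially a generalized sub/supersolution, and on the former pull the test data through the time change $s=\xi(t)$ to reduce to the classical viscosity property of $w$. The paper's proof is in fact considerably sketchier than yours---it simply asserts ``by a change of variable one sees that $u$ is a generalized subsolution''---so your explicit construction of $(\tilde b,\phi,\tilde G)$ and your honest flagging of the admissibility issue (that $\tilde G(s,\cdot)=G(\xi^{-1}(s),\cdot)(\xi^{-1})'(s)$ need not be continuous in $s$ since $(\xi^{-1})'$ is only $L^1$) goes beyond what the paper supplies; your alternative stability route through the $v^\varepsilon$ is not in the paper at all.
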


\begin{proof}
We partition $\left[ 0,T\right] $ into $0\leq s_{1}\leq t_{1}\leq \cdots
\leq s_{n}\leq t_{n}\leq T$ such that $\xi $ is increasing on $\left[
s_{i},t_{i}\right] $, constant on $\left[ t_{i},s_{i+1}\right] $. Say $%
u\left( t,x\right) +\int_{0}^{t}b\left( r\right) dr-\phi \left( t,x\right) $
attains a local maximum at $\left( \hat{t},\hat{x}\right) $. If $\hat{t}\in %
\left[ s_{i},t_{i}\right] $ by construction $u\left( t,x\right) \equiv
w\left( t,\xi _{t}\right) $ with $w$ a viscosity subsolution of $\partial
_{t}-\tilde{F}=0$, $\tilde{F}\left( t,r,x,p,X\right) =F\left( \xi
^{-1}\left( t\right) ,r,x,p,X\right) $, hence also a generalized subsolution
and using that $\xi $ is invertible on $\left[ s_{i},t_{i}\right] $ one sees
by a change of variable that also $u$ is a generalized subsolution on $\left[
s_{i},t_{i}\right] $ of $\partial _{t}-\tilde{F}=0.$

If $\hat{t}\in \left[ t_{i},s_{i+1}\right] ,$ then $\xi $ is constant, hence 
$\dot{\xi}=0$ a.s.\ and so $F\left( t,x,r,p,X\right) \dot{\xi}\left(
t\right) =0$ for a.e.\ $t\in B_{\delta }\left( \hat{t}\right) $ and $u$ is a
generalized subsolution on that interval. This shows that $u$ is a
generalized subsolution and the same argument shows that $u$ is a
generalized supersolution.
\end{proof}

\bibliographystyle{aalpha}
\bibliography{acompat,roughpaths,rpath}

\end{document}